\providecommand{\U}[1]{\protect\rule{.1in}{.1in}}
\providecommand{\U}[1]{\protect\rule{.1in}{.1in}}
\providecommand{\U}[1]{\protect\rule{.1in}{.1in}}
\newcommand{\Id}{\operatorname{Id}}
\newtheorem{theorem}{Theorem}
\newtheorem{algorithm}[theorem]{Algorithm}
\numberwithin{equation}{section}
\numberwithin{theorem}{section}
\newtheorem{definition}[theorem]{Definition}
\newtheorem{example}[theorem]{Example}
\newtheorem{lemma}[theorem]{Lemma}
\newtheorem{remark}[theorem]{Remark}
\newenvironment{proof}[1][Proof]{\textbf{#1.} }{\ \rule{0.5em}{0.5em}}
\begin{document}

\title{\textbf{The Cyclic Douglas-Rachford Algorithm with }$\boldsymbol{r}%
$\textbf{-sets-Douglas-Rachford Operators}}
\author{Francisco J. Arag\'{o}n Artacho$^{1}$, Yair Censor$^{2}$ and Aviv
Gibali$^{3,4}$\bigskip \\$^{1}$Department of Mathematics, University of Alicante, \\03690 San Vicente del Raspeig,
Alicante, Spain\\
\{francisco.aragon@ua.es\}\medskip \\
$^{2}$Department of Mathematics, University of Haifa\\
Mt.\ Carmel, Haifa 3498838, Israel. \{yair@math.haifa.ac.il\}\medskip\\
$^{3}$Department of Mathematics, ORT Braude College \\
Karmiel 2161002, Israel.\\
$^{4}$The Center for Mathematics and Scientific Computation\\
University of Haifa, Mt. Carmel\\
Haifa 3498838, Israel. \{avivg@braude.ac.il\}}
\date{January 1, 2018. Revised: May 20, 2018.\\
 Accepted for publication in Optimization Methods and Software (OMS) July 17, 2018. }
\maketitle

\begin{abstract}
The Douglas-Rachford (DR) algorithm is an iterative procedure that uses
sequential reflections onto convex sets and which has become popular for
convex feasibility problems. In this paper we propose a structural
generalization that allows to use $r$-sets-DR operators in a cyclic fashion.
We prove convergence and present numerical illustrations of the potential advantage of such operators with $r>2$ over the classical $2$-sets-DR operators in a cyclic algorithm.\medskip

\textbf{Keywords}: Douglas--Rachford; reflections; feasibility problems;
$r$-sets-Douglas-Rachford operator.\medskip

\textbf{2010 Mathematics Subject Classification}: 65K05; 90C25.

\end{abstract}

\section{Introduction\label{sec:Intro}}

We consider the convex feasibility problem (CFP) in a real Hilbert space
$\mathcal{H}$. For $i=0,1,\cdots,m-1,$ let $C_{i}\subseteq\mathcal{H}$ be
nonempty, closed and convex sets. The CFP is to%
\begin{equation}
\text{find a point }x^{\ast}\in C:=\cap_{i=0}^{m-1}C_{i}. \label{P:CFP}%
\end{equation}
The literature about projection methods for solving this problem is vast, see,
e.g., \cite{bb96}, \cite{cc-review}, \cite[Chapter 5]{CZ97} or the recent
\cite{BC-book}. The Douglas--Rachford (DR) algorithm whose origins are in
\cite{DR56} is a recent addition to this class of methods. We are unable to
compete with the excellent coverage of the literature on this algorithm
furnished in the recent 2017 paper by Bauschke and Moursi \cite{bm17} and
direct the reader there.
The DR algorithm has witnessed a surge of interest and publications investigating it in all directions, such as, e.g., for the non-convex and inconsistent case \cite{bdl18,benoist15, abt16}. A particular research direction consists of creating and studying new algorithmic structures that rely on the principles of the original DR algorithm.

This work belongs to this direction. We present and study a new algorithmic
structure for the DR algorithm that cyclically uses $r$-sets-DR operators. In
order to explain this recall the original $2$-sets-DR algorithm. Given two
sets $C_{0}$ and $C_{1}$ denote by $P_{C_{i}}$ the orthogonal projection onto
$C_{i}$ and denote the reflection with respect to $C_{i}$ by $\mathcal{R}%
_{C_{i}}=2P_{C_{i}}-\Id$, for $i=0,1$, where $\Id$ is the identity operator on
$\mathcal{H}$. With the combined operator $\mathcal{V}_{C_{0},C_{1}%
}:=\mathcal{R}_{C_{1}}\mathcal{R}_{C_{0}}$ the original\textit{ }%
$2$-sets-DR\textit{ }operator is defined as%
\begin{equation}\label{eq:DR_orig}
\mathcal{T}_{C_{0},C_{1}}:=\frac{1}{2}\left(  \Id+\mathcal{V}_{C_{0},C_{1}%
}\right)  .
\end{equation}
The original DR algorithm, starting from an arbitrary $x_0\in\mathcal{H}$, employed the sequential iterative process
$$x^{k+1}=\mathcal{T}_{C_{0},C_{1}}(x^k),\quad k\geq 0.$$
It is, thus, restricted to handling only two sets.

Borwein and Tam in \cite{bt14} introduced the cyclic-DR algorithm which is designed to
solve CFPs with more than two sets. Their cyclic-DR
algorithm applies sequentially the original\textit{ }$2$-sets-DR\textit{
}operator~\eqref{eq:DR_orig} over subsequent pairs of sets. Censor and Mansour in \cite{cm16}
extended the algorithmic structure to deal with string-averaging and
block-iterative structural regimes.

In this work we propose a cyclic DR algorithm that uses $r$-sets DR operators
and prove its convergence. We present numerical illustrations of the potential
advantage of $r$-sets DR operators with $r>2$ over the original\textit{ }%
$2$-sets-DR\textit{ }operator in this framework.
We discovered the insight how to employ $r$-sets-DR operators which hides in the cyclic DR algorithm
of Borwein and Tam \cite[Section 3]{bt14}. The Borwein-Tam cyclic DR algorithm uses $2$-sets-DR operators sequentially but for each new pair of sets it uses the last set of the previous pair as the first set in the new pair. Mimicking
this recipe enables us to use $r$-sets-DR operators in a cyclic DR algorithm.

The analysis of convergence of the algorithm presented here is quite standard and relies on tools from fixed point theory and convex analysis. So, the main contribution of the paper is the algorithmic discovery of how to properly employ $r$-sets DR operators with $r>2$ in the cyclic DR algorithm. This is a theoretical development that shows that the Borwein and Tam cyclic DR algorithm is a special case of the more general framework proposed here. This opens the door for many future research questions of extending results on the Borwein and Tam cyclic DR algorithm to the new $r$-sets DR operators with $r>2$ framework.

The paper is organized as follows. In Section \ref{sec:Prelim} we present
definitions and notions needed in the sequel. In Section \ref{sec:Algs} the
$r$-sets-DR operator and cyclic algorithm are given and the algorithm's
convergence is analyzed. Finally, in Section \ref{sec:Num}
numerical illustrations demonstrate the potential advantage of $r$-sets DR operators with $r>2$.

\section{Preliminaries\label{sec:Prelim}}

Let $\mathcal{H}$ be a real Hilbert space with inner product $\langle
\cdot,\cdot\rangle$ and norm $\Vert\cdot\Vert$, and let $D$ be a nonempty,
closed and convex subset of $\mathcal{H}$. We write $x^{k}\rightharpoonup x$
to indicate that the sequence $\left\{  x^{k}\right\}  _{k=0}^{\infty}$
converges weakly to $x$, and $x^{k}\rightarrow x$ to indicate that the sequence
$\left\{  x^{k}\right\}  _{k=0}^{\infty}$ converges strongly to $x.$ We start
by recalling the definition and properties of the metric projection operator.
For each point $x\in\mathcal{H},$\ there exists a unique nearest point in $D$,
denoted by $P_{D}(x)$. That is,%
\begin{equation}
\left\Vert x-P_{D}\left(  x\right)  \right\Vert \leq\left\Vert x-y\right\Vert
,\text{ for all }y\in D.
\end{equation}
The mapping $P_{D}:\mathcal{H}\rightarrow D,$ called \textit{the metric
projection of }$H$\textit{ onto }$D$, is well-known,
see for example \cite[Fact 1.5(i)]{bb96}, to be \textit{firmly} \textit{nonexpansive}, thus, \textit{nonexpansive}, see Definition \ref{def:FNE} below. The metric projection $P_{D}$ is characterized
\cite[Section 3]{gr84} by the facts that $P_{D}(x)\in D$ and%
\begin{equation}
\left\langle x-P_{D}\left(  x\right)  ,P_{D}\left(  x\right)  -y\right\rangle
\geq0,\text{ for all }x\in\mathcal{H},\text{ }y\in D. \label{eq:ProjP1}%
\end{equation}
If $D$ is a hyperplane, or even a closed affine subspace,
then (\ref{eq:ProjP1}) becomes an equality.

All items in the next definition can be found, e.g., in Cegielski's excellent
book \cite{Cegielski12}.

\begin{definition}
\label{def:FNE} Let $h:\mathcal{H}\rightarrow\mathcal{H}$ be an operator and
let $D\subset\mathcal{H}.$

(i) The operator $h$ is called \texttt{Lipschitz continuous} on $D\subset
\mathcal{H}$ with constant $L>0$ if%
\begin{equation}
\Vert h(x)-h(y)\Vert\leq L\Vert x-y\Vert,\text{\ for all\ }x,y\in D.
\end{equation}

(ii) The operator $h$ is called \texttt{nonexpansive} on $D$ if it is
$1$-Lipschitz continuous.\newline

(iii) The operator $h$ is called \texttt{firmly nonexpansive}
\texttt{\cite{gr84}} on $D$ if%
\begin{equation}
\left\langle h(x)-h(y),x-y\right\rangle \geq\left\Vert h(x)-h(y)\right\Vert
^{2},\text{ for all }x,y\in D\text{.}%
\end{equation}

(iv) The operator $h$ is called \texttt{averaged} \cite{bbr78} if there exists
a nonexpansive operator $N:\mathcal{H}\rightarrow\mathcal{H}\ $and a number
$c\in(0,1)$ such that%
\begin{equation}
h=(1-c)\Id+cN.
\end{equation}
In this case, we say that $h$ is $c$-av \cite{byrne04}.\newline

(v) A nonexpansive operator $h$ satisfies \texttt{Condition (W)} \cite{dr92}
if whenever $\{x^{k}-y^{k}\}_{k=1}^{\infty}$ is bounded and $\Vert x^{k}%
-y^{k}\Vert-\Vert h(x^{k})-h(y^{k})\Vert\rightarrow0$, it follows that
$(x^{k}-y^{k})-(h(x^{k})-h(y^{k}))\rightharpoonup0.$\newline

(vi) The operator $h$ is called \texttt{strongly nonexpansive} \cite{br77} if
it is nonexpansive and whenever $\{x^{k}-y^{k}\}_{k=1}^{\infty}$ is bounded
and $\Vert x^{k}-y^{k}\Vert-\Vert h(x^{k})-h(y^{k})\Vert\rightarrow0$, it
follows that $(x^{k}-y^{k})-(h(x^{k})-h(y^{k}))\rightarrow0$.\newline
\end{definition}

\begin{definition}
\label{def:QNE} Let $h:\mathcal{H}\rightarrow\mathcal{\mathcal{H}}$
be an operator with $\operatorname*{Fix}(h):=\{x\in
\mathcal{H}\mid h(x)=x\}\neq\emptyset$ and let $D\subseteq\mathcal{H}$ be a
nonempty, closed and convex set.

(i) The operator $h$ is called \texttt{quasi-nonexpansive} (QNE) if for all
$x\in\mathcal{\mathcal{H}}$ and all $z\in\operatorname*{Fix}(h)$,%
\begin{equation}
\Vert h(x)-z\Vert\leq\Vert x-z\Vert. \label{eq:qne}%
\end{equation}

(ii) A sequence $\{x_{k}\}_{k=0}^{\infty}\subset\mathcal{H}$ is said to be
\texttt{Fej\'{e}r-monotone} with respect to $D$, if for all $k\geq0$,%
\begin{equation}
\Vert x^{k+1}-u\Vert\leq\Vert x^{k}-u\Vert,\text{ for any }u\in D.
\label{eq:FM}%
\end{equation}

\end{definition}

Some of the relations between the above classes of operators are collected in
the following lemma. For more details and proofs, see\ Bruck and Reich
\cite{br77},\ Baillon et al. \cite{bbr78}, Goebel and Reich \cite{gr84}, Byrne
\cite{byrne04} and Combettes \cite{combettes04}.

\begin{lemma}
\label{Lem:av}(i) The operator $h:\mathcal{H}\rightarrow\mathcal{H}$ is firmly
nonexpansive, if and only if it is $1/2$-averaged.\newline

(ii) If $h_{1}$ and $h_{2}$ are $c_{1}$-av and $c_{2}$-av, respectively, then
their composition $S=h_{1}h_{2}$ is $(c_{1}+c_{2}-c_{1}c_{2})$-av.\newline

(iii) If $h_{1}$ and $h_{2}$ are averaged and $\operatorname*{Fix}(h_{1}%
)\cap\operatorname*{Fix}(h_{2})\neq\emptyset$, then%
\begin{equation}
\operatorname*{Fix}(h_{1})\cap\operatorname*{Fix}(h_{2})=\operatorname*{Fix}%
(h_{1}h_{2})=\operatorname*{Fix}(w_{1}h_{1}+w_{2}h_{2})
\end{equation}
with $w_{1}+w_{2}=1$, $w_{1},w_{2}\in$ (0,1). This result can
be generalized for any finite number of averaged operators, see, e.g.,
\cite[Lemma 2.2]{combettes04}.\newline

(iv) Every averaged operator is strongly nonexpansive and, therefore,
satisfies condition (W).
\end{lemma}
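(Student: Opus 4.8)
The plan is to establish the four items essentially independently, each by a short computation resting on the single elementary identity
\begin{equation}
\left\Vert (1-c)a+cb\right\Vert ^{2}=(1-c)\Vert a\Vert ^{2}+c\Vert b\Vert ^{2}-c(1-c)\Vert a-b\Vert ^{2},\qquad a,b\in\mathcal{H},\ c\in[0,1],
\end{equation}
which I would record and verify at the outset (it specializes to the two-weight convex-combination identity). From it I would first extract two consequences for an averaged $g=(1-c)\Id+cN$ with $N$ nonexpansive: (a) $\Vert g(x)-g(y)\Vert ^{2}\le\Vert x-y\Vert ^{2}-\tfrac{1-c}{c}\Vert (\Id-g)(x)-(\Id-g)(y)\Vert ^{2}$ (use the identity with $a=x-y$, $b=N(x)-N(y)$, note $(\Id-g)(x)-(\Id-g)(y)=c(a-b)$, and use $\Vert b\Vert\le\Vert a\Vert$); and (b) the fixed-point specialization $\Vert g(x)-z\Vert ^{2}\le\Vert x-z\Vert ^{2}-\tfrac{1-c}{c}\Vert x-g(x)\Vert ^{2}$ for $z\in\operatorname*{Fix}(g)$.

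For (i), write $h=\tfrac12(\Id+R)$ with $R:=2h-\Id$. Expanding and collecting terms gives
\begin{equation}
\langle h(x)-h(y),x-y\rangle -\Vert h(x)-h(y)\Vert ^{2}=\tfrac14\left(\Vert x-y\Vert ^{2}-\Vert R(x)-R(y)\Vert ^{2}\right),
\end{equation}
so $h$ is firmly nonexpansive exactly when $R$ is nonexpansive; and since $h=(1-\tfrac12)\Id+\tfrac12 R$, the latter says precisely that $h$ is $1/2$-averaged. For (iv), let $h=(1-c)\Id+cN$ with $N$ nonexpansive and $c\in(0,1)$; nonexpansiveness of $h$ is immediate from the triangle inequality. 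Put $a^{k}:=x^{k}-y^{k}$, $b^{k}:=N(x^{k})-N(y^{k})$, so $h(x^{k})-h(y^{k})=(1-c)a^{k}+cb^{k}$ and $\Vert b^{k}\Vert\le\Vert a^{k}\Vert$. The preliminary identity yields
\begin{equation}
c(1-c)\Vert a^{k}-b^{k}\Vert ^{2}\le\Vert a^{k}\Vert ^{2}-\Vert h(x^{k})-h(y^{k})\Vert ^{2}=\left(\Vert a^{k}\Vert -\Vert h(x^{k})-h(y^{k})\Vert \right)\left(\Vert a^{k}\Vert +\Vert h(x^{k})-h(y^{k})\Vert \right).
\end{equation}
Under the hypotheses ($\{a^{k}\}$ bounded, $\Vert a^{k}\Vert -\Vert h(x^{k})-h(y^{k})\Vert\to0$) the right-hand side tends to $0$, hence $\Vert a^{k}-b^{k}\Vert\to0$; since $(x^{k}-y^{k})-(h(x^{k})-h(y^{k}))=c(a^{k}-b^{k})$, $h$ is strongly nonexpansive, and Condition (W) follows because strong convergence implies weak convergence. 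I expect no real difficulty in (i) or (iv).

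For (iii), the easy inclusions are immediate (a common fixed point of $h_{1}$ and $h_{2}$ is fixed by $h_{1}h_{2}$ and by $w_{1}h_{1}+w_{2}h_{2}$). For the converse, chain consequence (b): first for $h_{2}$ at $x$, then for $h_{1}$ at $h_{2}(x)$, to get, for $z\in\operatorname*{Fix}(h_{1})\cap\operatorname*{Fix}(h_{2})$,
\begin{equation}
\Vert h_{1}h_{2}(x)-z\Vert ^{2}\le\Vert x-z\Vert ^{2}-\tfrac{1-c_{2}}{c_{2}}\Vert x-h_{2}(x)\Vert ^{2}-\tfrac{1-c_{1}}{c_{1}}\Vert h_{2}(x)-h_{1}h_{2}(x)\Vert ^{2};
\end{equation}
if $x\in\operatorname*{Fix}(h_{1}h_{2})$ the two nonnegative correction terms must vanish, forcing $h_{2}(x)=x$ and hence $h_{1}(x)=x$. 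The combination case follows the same scheme: convexity of $\Vert\cdot\Vert ^{2}$ together with (b) applied to $h_{1}$ and to $h_{2}$ gives $\Vert (w_{1}h_{1}+w_{2}h_{2})(x)-z\Vert ^{2}\le\Vert x-z\Vert ^{2}-w_{1}\tfrac{1-c_{1}}{c_{1}}\Vert x-h_{1}(x)\Vert ^{2}-w_{2}\tfrac{1-c_{2}}{c_{2}}\Vert x-h_{2}(x)\Vert ^{2}$, and (since $w_{1},w_{2}>0$) a fixed point of the combination again forces $h_{1}(x)=h_{2}(x)=x$; the extension to finitely many averaged operators is the same argument. Finally, item (ii) I would quote from Combettes \cite{combettes04}, or else derive it by applying consequence (a) to $h_{1}$ and to $h_{2}$ and combining the two resulting inequalities into the $c$-averagedness inequality for $h_{1}h_{2}$ with constant $c_{1}+c_{2}-c_{1}c_{2}$. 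That algebraic combination is the only step here that is not purely mechanical, so it is the one I would carry out most carefully, although it remains routine.
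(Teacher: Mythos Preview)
Your arguments for all four items are correct and self-contained. The paper, however, does not actually prove this lemma: it is stated as a collection of known facts with the sentence ``For more details and proofs, see Bruck and Reich \cite{br77}, Baillon et al.\ \cite{bbr78}, Goebel and Reich \cite{gr84}, Byrne \cite{byrne04} and Combettes \cite{combettes04}.'' So there is no ``paper's own proof'' to compare against beyond those citations. What you have written is precisely the standard elementary route through these references: the convex-combination identity for $\Vert\cdot\Vert^{2}$, the equivalence of firm nonexpansiveness with nonexpansiveness of $2h-\Id$, the averagedness inequality (your consequence~(a)) feeding both the strong-nonexpansiveness conclusion in (iv) and the fixed-point arguments in (iii), and the Combettes composition constant for (ii). Your plan to either quote \cite{combettes04} for (ii) or carry out the algebraic combination is exactly what the paper itself relies on. In short: your proposal is correct and more detailed than the paper, which simply defers to the literature.
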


Another useful property of a sequence of operators is the following, see,
e.g., \cite[Definition 3.6.1]{Cegielski12}.

\begin{definition}
\label{def:asymp-reg}Let $\{U_{j}\}_{j=1}^{\infty}$ be a sequence of operators
$U_{j}:\mathcal{H}\rightarrow\mathcal{H}$ and denote $T_{\ell}=U_{\ell}%
U_{\ell-1},\dots U_{1}$. We say that $\{U_{j}\}_{j=1}^{\infty}$ is
\texttt{asymptotically regular} if
\begin{equation}
\lim_{\ell\rightarrow\infty}\Vert T_{\ell+1}(x)-T_{\ell}(x)\Vert=0,\text{\ for
all\ }x\in\mathcal{H}. \label{Eq:AsyOp}%
\end{equation}

\end{definition}

The well-known Opial Theorem \cite[Theorm 1]{Opial67}, see also \cite[Theorem
3.5.1]{Cegielski12}, is presented next.

\begin{theorem}
\label{The:Opial} Let $\mathcal{H}$ be a real Hilbert space and let
$D\subset\mathcal{H}$ be closed and convex. If $h:D\rightarrow D$ is an
averaged operator with $\operatorname*{Fix}(h)\neq\emptyset$ then, for any
$x^{0}\in D$, the sequence $\left\{  x^{k}\right\}  _{k=0}^{\infty},$
generated by $x^{k+1}=h(x^{k}),$ converges weakly to a point $x^{\ast}%
\in\operatorname*{Fix}(h)$.
\end{theorem}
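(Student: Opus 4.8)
The plan is to follow the classical scheme for iterates of an averaged operator: establish Fej\'er monotonicity (hence boundedness), then asymptotic regularity, then that every weak cluster point is a fixed point, and finally that the weak cluster point is unique, from which weak convergence of the whole sequence follows. Since $h$ is averaged it is, in particular, nonexpansive and thus quasi-nonexpansive, so for every $z\in\operatorname*{Fix}(h)$ one has $\|x^{k+1}-z\|=\|h(x^k)-h(z)\|\le\|x^k-z\|$. Hence $\{x^k\}$ is Fej\'er-monotone with respect to $\{z\}$, the scalar sequence $\{\|x^k-z\|\}_{k\ge0}$ is non-increasing and therefore convergent, and $\{x^k\}$ is bounded; moreover, $D$ being closed and convex (hence weakly closed), any weak cluster point of $\{x^k\}$ still lies in $D$, where $h$ is defined.

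Next I would prove asymptotic regularity, $\|x^{k+1}-x^k\|=\|x^k-h(x^k)\|\to0$. By Lemma~\ref{Lem:av}(iv) the operator $h$ is strongly nonexpansive, so I would apply its defining property to the bounded sequence $\{x^k-z\}$ against the constant comparison $z\in\operatorname*{Fix}(h)$: since $\|x^k-z\|-\|h(x^k)-h(z)\|=\|x^k-z\|-\|x^{k+1}-z\|\to0$ because $\{\|x^k-z\|\}$ converges, strong nonexpansiveness gives $(x^k-z)-(h(x^k)-h(z))=x^k-x^{k+1}\to0$. (Alternatively, write $h=(1-c)\Id+cN$ and use the standard averaged-operator estimate $\|x^{k+1}-z\|^2\le\|x^k-z\|^2-\tfrac{1-c}{c}\|x^{k+1}-x^k\|^2$, which telescopes to $\sum_k\|x^{k+1}-x^k\|^2<\infty$.) The core step, which I expect to be the main obstacle, is to show that every weak cluster point of $\{x^k\}$ belongs to $\operatorname*{Fix}(h)$, since this is the only place where the weak topology genuinely interacts with the nonexpansive structure of $h$. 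Let $x^{k_j}\rightharpoonup x^*$; then $x^*\in D$ by the first paragraph. Since $h$ is nonexpansive, $\Id-h$ is demiclosed at $0$ (Browder's demiclosedness principle; equivalently, this is precisely what Condition~(W) from Lemma~\ref{Lem:av}(iv) yields here, because $x^{k_j}-h(x^{k_j})\to0$ holds strongly, hence weakly), and combining this with $x^{k_j}-h(x^{k_j})\to0$ from the asymptotic-regularity step forces $x^*-h(x^*)=0$. A self-contained alternative avoids citing demiclosedness: if $h(x^*)\ne x^*$, then Opial's inequality gives $\liminf_j\|x^{k_j}-x^*\|<\liminf_j\|x^{k_j}-h(x^*)\|$, contradicting $\|x^{k_j}-h(x^*)\|\le\|x^{k_j}-h(x^{k_j})\|+\|x^{k_j}-x^*\|$ after passing to the $\liminf_j$.

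Finally I would establish uniqueness of the weak cluster point. If $x^{k_j}\rightharpoonup x^*$ and $x^{k_i}\rightharpoonup y^*$ with $x^*,y^*\in\operatorname*{Fix}(h)$, then $\lim_k\|x^k-x^*\|$ and $\lim_k\|x^k-y^*\|$ both exist by Fej\'er monotonicity, so from the identity $\|x^k-x^*\|^2-\|x^k-y^*\|^2=2\langle x^k,\,y^*-x^*\rangle+\|x^*\|^2-\|y^*\|^2$ the sequence $\langle x^k,\,y^*-x^*\rangle$ converges; evaluating its limit along the two subsequences yields $\langle x^*,\,y^*-x^*\rangle=\langle y^*,\,y^*-x^*\rangle$, i.e. $\|x^*-y^*\|^2=0$ and $x^*=y^*$. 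Hence $\{x^k\}$ has a single weak cluster point and, being bounded, converges weakly to it; this limit lies in $\operatorname*{Fix}(h)$, which completes the proof.
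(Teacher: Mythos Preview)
The paper does not actually prove Theorem~\ref{The:Opial}; it is stated as a preliminary result with citations to Opial's original 1967 paper and to \cite[Theorem~3.5.1]{Cegielski12}, and is then used as a black box in the proof of Theorem~\ref{The:Main}. There is therefore no ``paper's own proof'' to compare against.

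Your argument is correct and is exactly the classical route one finds in the references the paper cites: Fej\'er monotonicity from nonexpansiveness, asymptotic regularity either from strong nonexpansiveness (Lemma~\ref{Lem:av}(iv)) or from the telescoping averaged-operator inequality, demiclosedness of $\Id-h$ at $0$ (or the equivalent Opial-inequality contradiction), and the standard Fej\'er uniqueness-of-cluster-point argument. One small quibble: the parenthetical claim that Condition~(W) ``is precisely'' demiclosedness here is not quite accurate---Condition~(W) concerns weak convergence to zero of the residual between \emph{two} moving sequences, not the demiclosedness of $\Id-h$ at a fixed point---but since you also invoke Browder's principle directly and supply the self-contained Opial-inequality alternative, this does not create a gap.
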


\section{The $r$-sets-Douglas-Rachford operator and algorithm\label{sec:Algs}}

The $r$-sets-Douglas-Rachford ($r$-sets-DR) operator was defined in
\cite{cm16} as follows.

\begin{definition}
\label{def:m-dr}\cite[Definition 22]{cm16} Given a sequence of $r$ nonempty
closed convex sets, $r\geq2,$ $C_{0},C_{1},\ldots,C_{r-1}\subseteq\mathcal{H}%
$, define the composite reflection operator $\mathcal{V}_{C_{0},C_{1}%
,\ldots,C_{r-1}}:\mathcal{H\rightarrow\mathcal{H}}$ by%
\begin{equation}
\mathcal{V}_{C_{0},C_{1},\ldots,C_{r-1}}:=\mathcal{R}_{C_{r-1}}\mathcal{R}%
_{C_{r-2}}\cdots\mathcal{R}_{C_{0}}, \label{eq:composite ref.}%
\end{equation}
where $\mathcal{R}_{C_{i}}=2P_{C_{i}}-\Id$ is the reflection on the
corresponding $C_{i}$. The $r$-sets-DR operator $\mathcal{T}_{C_{0}%
,C_{1},\ldots,C_{r-1}}:\mathcal{H\rightarrow\mathcal{H}}$ is defined by%
\begin{equation}
\mathcal{T}_{C_{0},C_{1},\ldots,C_{r-1}}:=\frac{1}{2}\left(  \Id%
+\mathcal{V}_{C_{0},C_{1},\ldots,C_{r-1}}\right)  . \label{eq:r-sets-DR op.}%
\end{equation}

\end{definition}

For $r=2$ the $r$-sets-DR operator coincides with the
original $2$-sets-DR operator \eqref{eq:DR_orig} and when it is applied sequentially repeatedly on two sets
$m=2$ the original DR algorithm is recovered. For $r=3$ the $r$-sets-DR operator coincides with the $3$-sets-DR operator defined in \cite[Eq. (2)]{artacho2013recent}. The question whether the $3$-sets-DR operator can be applied sequentially repeatedly on three sets $m=3$ was asked there. However, it is shown, in \cite[Example 2.1]{artacho2013recent}, that such an iterative process of the form%
\begin{equation}
x^{k+1}=\mathcal{T}_{C_{0},C_{1},C_{2}}(x^{k})
\end{equation}
that uses $3$-sets-DR operators sequentially \textcolor{blue}{for} $m=3$ need
not generate a sequence that converges to a feasible point.

In this paper we discovered the insight how to employ
$r$-sets-DR operators which hides in the cyclic DR algorithm
of Borwein and Tam \cite[Section 3]{bt14}. The Borwein-Tam cyclic DR algorithm
uses $2$-sets-DR operators sequentially but for each new pair of sets it uses
the last set of the previous pair as the first set in the new pair. Mimicking
this recipe enables us to use $r$-sets-DR operators in a cyclic DR algorithm.

Given a CFP (\ref{P:CFP}) with $m$ sets indexed by
$0,1,\ldots,m-1$, and an integer $r\geq2,$ we compose, for any integer $d\geq1,$ the finite
sequence of sets
\begin{equation}
C_{m,r}(d):=C_{((r-1)d-(r-1))\text{mod }m},C_{((r-1)d-(r-2))\text{mod }%
m},\ldots,C_{((r-1)d)\text{mod }m},
\end{equation}
in which the individual sets belong to the family of sets of the given CFP. We further define the operator $S_{d}:\mathcal{H}\rightarrow\mathcal{H}$%
\begin{equation}
S_{d}:=\mathcal{T}_{C_{m,r}(d)}, \label{Oper:S}%
\end{equation}
performing an $r$-sets-DR operator on the sets of $C_{m,r}(d).$ We use it to
present our $r$-sets-Douglas-Rachford algorithm.

\begin{algorithm}
\label{alg:Gen-ALG}$\left.  {}\right.  $\textbf{The }$\boldsymbol{r}%
$\textbf{-sets-Douglas-Rachford cyclic Algorithm}\newline

\textbf{Step 0}: Select an arbitrary starting point $x^{0}\in\mathcal{H}$ and
set $k=0$.\newline

\textbf{Step 1}: Given the current iterate $x^{k}$, compute%
\begin{equation}
\label{Alg:step}x^{k+1}=S_{k+1}(x^{k}).
\end{equation}

\textbf{Step 2}:\textbf{\ }If\textbf{\ } $x^k=x^{k+1}=\dots=x^{k+\lceil m/r\rceil}$ (where $\lceil a\rceil$ stands for the smallest integer greater than or equal to $a$) then stop. Otherwise,
set $k\leftarrow(k+1)$ and return to \textbf{Step 1.}
\end{algorithm}

\begin{example}
\label{example:1}%
Assume that the CFP contains 5 sets
$C_{0},C_{1},C_{2},C_{3},C_{4},$ and choose $r=3.$ Then
\begin{equation}
C_{5,3}(1)=C_{((3-1)1-(3-1))\text{mod }5},C_{((3-1)1-(3-2))\text{mod }%
5},\ldots,C_{((3-1)1)\text{mod }5}=C_{0},C_{1},C_{2}%
\end{equation}
and $S_{1}=\mathcal{T}_{C_{5,3}(1)}=\mathcal{T}_{C_{0},C_{1},C_{2}}.$
Similarly, $S_{2}=\mathcal{T}_{C_{5,3}(2)}=\mathcal{T}_{C_{2},C_{3},C_{4}},$
$S_{3}=\mathcal{T}_{C_{5,3}(3)}=\mathcal{T}_{C_{4},C_{0},C_{1}},$
$S_{4}=\mathcal{T}_{C_{5,3}(4)}=\mathcal{T}_{C_{1},C_{2},C_{3}}$ and
$S_{5}=\mathcal{T}_{C_{5,3}(5)}=\mathcal{T}_{C_{3},C_{4},C_{0}},$ and so on
for all integers $d\geq1.$ This realizes the algorithmic structure stated above.
\end{example}

One way to handle the convergence proof of Algorithm \ref{alg:Gen-ALG} is to
base it on an appropriate generalization of Opial's theorem such as
\cite[Theorem 9.9]{cc11}, see also \cite[Section 3.5]{Cegielski12}. This
approach leads to the next theorem.

\begin{theorem}
\label{The:Gen_Opial} Let $C_{i}\subseteq\mathcal{H},$ for $i=0,\ldots,m-1,$
be nonempty, closed and convex sets with $\operatorname*{int}\left(
\cap_{0=1}^{m-1}C_{i}\right)  \neq\emptyset$. Let $\{S_{k}\}_{k=1}^{\infty}$
be the family of operators defined in (\ref{Oper:S}). Assume that
$S:\mathcal{H}\rightarrow\mathcal{H}$ is a nonexpansive operator with
$\operatorname*{Fix}(S)\neq\emptyset$ for which the following assumptions
hold:\newline(1) $\operatorname*{Fix}(S)\subseteq\left(  \cap_{k=1}^{\infty
}\operatorname*{Fix}(S_{k})\right)  \cap\left(  \cap_{0=1}^{m-1}C_{i}\right)
,$ \newline(2) $\left\{  x^{k}\right\}  _{k=0}^{\infty},$ generated by
Algorithm \ref{alg:Gen-ALG}, is \textit{Fej\'{e}r-monotone} with respect to
$\operatorname*{Fix}(S)$,\newline(3) the inequality $\Vert S_{k}(x^{k}%
)-x^{k}\Vert\geq\beta\Vert S(x^{k})-x^{k}\Vert$ is satisfied for all $k\geq0,$
for some $\beta>0$.\newline Then the sequence $\left\{  x^{k}\right\}
_{k=0}^{\infty},$ generated by Algorithm \ref{alg:Gen-ALG}, converges weakly
to a point $x^{\ast}\in\operatorname*{Fix}(S)$, and, in particular, $x^{\ast
}\in\cap_{0=1}^{m-1}C_{i}$.
\end{theorem}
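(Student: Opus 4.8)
The plan is to verify that the hypotheses exactly match those of the generalized Opial theorem \cite[Theorem 9.9]{cc11} (equivalently, \cite[Section 3.5]{Cegielski12}), and then invoke it. The generalized Opial framework requires three ingredients: a Fej\'{e}r-monotone sequence, a suitable asymptotic-regularity-type control, and a demiclosedness-type condition on the limiting operator, together with the identification of the weak cluster points as fixed points. Assumption (2) supplies Fej\'{e}r-monotonicity directly, which in particular gives that $\{x^k\}$ is bounded, so it has weak cluster points. Assumption (3) is the coercivity estimate that links the displacement of the actual iteration operators $S_k$ to the displacement of the fixed operator $S$; combined with the Fej\'{e}r-monotonicity it will force $\|x^k-S(x^k)\|\to 0$.

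In more detail, the first step is to extract from Fej\'{e}r-monotonicity with respect to $\operatorname{Fix}(S)$ the standard consequence that $\{\|x^k-u\|\}$ is nonincreasing, hence convergent, for every $u\in\operatorname{Fix}(S)$; summing the telescoping inequalities $\|x^{k+1}-u\|^2\le\|x^k-u\|^2$ does not by itself give the displacement going to zero, so here I would instead use that each $S_k$ is itself (firmly) nonexpansive — being an $r$-sets-DR operator, $S_k=\tfrac12(\Id+\mathcal V_{C_{m,r}(k)})$ is $\tfrac12$-averaged by Lemma \ref{Lem:av}(i) since compositions of reflections are nonexpansive and the half-sum with the identity is averaged — and exploit the averaged inequality $\|S_k(x)-u\|^2\le\|x-u\|^2-\|S_k(x)-x\|^2$ valid for $u\in\operatorname{Fix}(S_k)$. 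By assumption (1), any $u\in\operatorname{Fix}(S)$ lies in $\operatorname{Fix}(S_k)$ for all $k$, so this applies with $x=x^k$, giving $\|x^{k+1}-u\|^2\le\|x^k-u\|^2-\|x^{k+1}-x^k\|^2$. Telescoping over $k$ and using that $\|x^k-u\|$ converges yields $\sum_k\|S_k(x^k)-x^k\|^2<\infty$, hence $\|S_k(x^k)-x^k\|\to 0$. Then assumption (3) immediately gives $\|S(x^k)-x^k\|\to 0$.

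The second step is the demiclosedness argument. Since $S$ is nonexpansive, $\Id-S$ is demiclosed at $0$ (this is the demiclosedness principle; alternatively it follows from $S$ satisfying Condition (W) or from the Opial-type machinery already invoked). Take any weakly convergent subsequence $x^{k_j}\rightharpoonup x^\ast$; from $\|S(x^{k_j})-x^{k_j}\|\to 0$ and demiclosedness we conclude $x^\ast\in\operatorname{Fix}(S)$, and then by assumption (1), $x^\ast\in\bigcap_{i=0}^{m-1}C_i$. The third step upgrades weak subsequential convergence to weak convergence of the whole sequence: this is the classical Opial lemma — a Fej\'{e}r-monotone sequence with respect to a set $\Omega$ all of whose weak cluster points lie in $\Omega$ converges weakly to a point of $\Omega$. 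Applying it with $\Omega=\operatorname{Fix}(S)$ finishes the proof.

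The main obstacle, and essentially the only nontrivial point, is the passage $\|S_k(x^k)-x^k\|\to 0$: one must be careful that the averaged-operator inequality is applied with a common fixed point $u\in\operatorname{Fix}(S)$ (this is where assumption (1) is essential, since the $S_k$ need not share fixed points with $S$ a priori), and that the telescoping sum is justified by the already-established convergence of $\{\|x^k-u\|\}$. Everything else — boundedness, demiclosedness of $\Id-S$, and the final Opial argument — is routine once this estimate is in hand, and indeed this is precisely the content packaged in the generalized Opial theorem we are citing.
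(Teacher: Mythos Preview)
Your proposal is correct and follows essentially the same route as the paper: both arguments observe that each $S_k$, being an $r$-sets-DR operator, is $\tfrac12$-averaged (firmly nonexpansive), and then appeal to the generalized Opial framework of \cite[Theorem 9.9]{cc11}/\cite{cegielski07}. The paper simply cites that theorem after noting the asymptotic regularity of $\{S_k\}$, whereas you unpack its content (the telescoping estimate giving $\|S_k(x^k)-x^k\|\to 0$, assumption (3) transferring this to $S$, demiclosedness of $\Id-S$, and the Fej\'{e}r--Opial conclusion), but the substance is identical.
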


\begin{proof}
We first show that the family of operators $\{S_{k}\}_{k=1}^{\infty}$, defined
in (\ref{Oper:S}), is quasi-nonexpansive and asymptotically regular
(Definitions \ref{def:QNE} and \ref{def:asymp-reg} above). Let $d\in\mathbb{\mathbb{N}}$,
then the composition of reflections operator $\mathcal{V}_{C_{m,r}(d)}$ is
nonexpansive and hence $\mathcal{T}_{C_{m,r}(d)}$ is firmly-nonexpansive
($1/2$-averaged). Thus, this operator is also asymptotically regular, see,
e.g., the discussion following Theorem 9.7 in \cite{cc11}. The asymptotic
regularity of $\{S_{k}\}_{k=1}^{\infty}$ and the assumptions of the theorem
enable the use of \cite[Theorem 1]{cegielski07} (see also \cite[Theorem
9.9]{cc11} and \cite[Subsection 3.6]{Cegielski12}) to obtain the desired
result.\bigskip
\end{proof}

Since the conditions of Theorem \ref{The:Gen_Opial} are not easy to verify in
practice, we present an alternative convergence result for Algorithm
\ref{alg:Gen-ALG}. Given $m$ nonempty, closed and convex sets $C_{i}$, for
$i=0,1,\ldots,m-1$ and $1<r\leq m-1$, we look at the string of $(r-1)m$ sets
that is composed of $r-1$ copies of $\{C_{0},C_{1},\ldots,C_{m-1}\},$ i.e.,%
\begin{equation}
\underbrace{C_{0},C_{1},\ldots,C_{m-1}}_{1},\underbrace{C_{0},C_{1}%
,\ldots,C_{m-1}}_{2},\ldots,\underbrace{C_{0},C_{1},\ldots,C_{m-1}}_{r-1},
\label{eq:3.7}%
\end{equation}
and define with (\ref{Oper:S}) the composite operator $Q$:%
\begin{equation}
Q:=S_{m}\cdots S_{2}S_{1}. \label{Eq:Op_S1}%
\end{equation}

\begin{example}
To continue Example \ref{example:1}, here (\ref{eq:3.7})
takes the form:%
\begin{equation}
\underbrace{C_{0},C_{1},\ldots,C_{4}}_{1},\underbrace{C_{0},C_{1},\ldots
,C_{4}}_{2}%
\end{equation}
and the operator $Q$ of (\ref{Eq:Op_S1}) is:%
\begin{equation}
Q:=S_{5}S_{4}S_{3}S_{2}S_{1}=\mathcal{T}_{C_{3},C_{4},C_{0}}\mathcal{T}%
_{C_{1},C_{2},C_{3}}\mathcal{T}_{C_{4},C_{0},C_{1}}\mathcal{T}_{C_{2}%
,C_{3},C_{4}}\mathcal{T}_{C_{0},C_{1},C_{2}}.
\end{equation}
This kind of algorithmic operator guarantees that the last set that is handled is $C_{0}$.
\end{example}

We will prove the convergence of Algorithm \ref{alg:Gen-ALG} with $S_{k}$ in
(\ref{Alg:step}) replaced by $Q,$ for all $k\geq1.$ We need the following
lemma which is based on \cite[Corollary 23]{cm16}.

\begin{lemma}
Let $C_{i}\subseteq\mathcal{H},$ for $i=0,1,\ldots,m-1,$ be nonempty, closed
and convex sets with $\operatorname*{int}\left(  \cap_{i=0}^{m-1}C_{i}\right)
\neq\emptyset$. For fixed $r\in\{2,3,\ldots,m-1\}$, we have%
\begin{equation}
\cap_{i=0}^{r-1}C_{i}=\operatorname*{Fix}(\mathcal{T}_{C_{0},C_{1}%
,\ldots,C_{r-1}}). \label{eq:20}%
\end{equation}

\end{lemma}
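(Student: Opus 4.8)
The plan is to prove the set equality in \eqref{eq:20} by establishing the two inclusions separately, with the easy direction being $\cap_{i=0}^{r-1}C_{i}\subseteq\operatorname*{Fix}(\mathcal{T}_{C_{0},C_{1},\ldots,C_{r-1}})$ and the reverse inclusion requiring the interior assumption. For the forward inclusion, suppose $x\in\cap_{i=0}^{r-1}C_{i}$. Then $P_{C_i}(x)=x$ for each $i$, so $\mathcal{R}_{C_i}(x)=2P_{C_i}(x)-x=x$ for each $i$, hence the composition $\mathcal{V}_{C_{0},\ldots,C_{r-1}}(x)=x$, and therefore $\mathcal{T}_{C_{0},\ldots,C_{r-1}}(x)=\tfrac12(x+x)=x$, i.e., $x\in\operatorname*{Fix}(\mathcal{T}_{C_{0},\ldots,C_{r-1}})$. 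This direction uses nothing beyond the definitions and does not need the interior hypothesis.

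For the reverse inclusion, the natural route is to invoke \cite[Corollary 23]{cm16} as flagged in the lemma statement. The point is that fixed points of the Douglas--Rachford operator built from reflections are, in general, only shadows of the feasible set rather than feasible points themselves; the standard fix is that when the intersection has nonempty interior (or more generally under a suitable regularity/qualification condition), the fixed point set of $\mathcal{T}$ collapses exactly onto $\cap_{i=0}^{r-1}C_i$. So I would cite \cite[Corollary 23]{cm16} to get $\operatorname*{Fix}(\mathcal{T}_{C_{0},\ldots,C_{r-1}})=\cap_{i=0}^{r-1}C_i$ under $\operatorname*{int}(\cap_{i=0}^{m-1}C_i)\neq\emptyset$, noting that $\cap_{i=0}^{m-1}C_i\subseteq\cap_{i=0}^{r-1}C_i$ so the interior of the larger-indexed intersection being nonempty forces the interior of $\cap_{i=0}^{r-1}C_i$ to be nonempty as well, which is the hypothesis actually needed.

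If instead one wanted a self-contained argument for $\operatorname*{Fix}(\mathcal{T}_{C_{0},\ldots,C_{r-1}})\subseteq\cap_{i=0}^{r-1}C_i$, the key steps would be: take $x\in\operatorname*{Fix}(\mathcal{T}_{C_{0},\ldots,C_{r-1}})$, so $\mathcal{V}_{C_{0},\ldots,C_{r-1}}(x)=x$; then for any $z\in\cap_{i=0}^{r-1}C_i$ (nonempty since the interior is), use that each reflection $\mathcal{R}_{C_i}$ is nonexpansive and fixes $z$, so $\|x-z\|=\|\mathcal{V}(x)-z\|\le\|x-z\|$ forces equality in each of the $r$ nonexpansivity inequalities along the composition chain. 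Equality in $\|\mathcal{R}_{C_i}(y)-z\|\le\|y-z\|$ together with the firm nonexpansivity of $P_{C_i}$ (equivalently, a parallelogram-type identity $\|\mathcal{R}_{C_i}(y)-z\|^2=\|y-z\|^2-4\langle y-P_{C_i}(y),P_{C_i}(y)-z\rangle$ for $z\in C_i$) gives $\langle y-P_{C_i}(y),P_{C_i}(y)-z\rangle=0$. Applying this with $z$ ranging over an open ball inside $\cap_i C_i$ and rearranging forces $y=P_{C_i}(y)$ at each stage, working backwards through the composition to conclude $x\in C_i$ for all $i$.

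The main obstacle is the reverse inclusion and specifically the reliance on the interior hypothesis: without it, $\operatorname*{Fix}(\mathcal{T})$ genuinely can be strictly larger than $\cap_i C_i$ (the shadow phenomenon), so any proof must pin down exactly where the qualification condition enters. Since the lemma explicitly says it ``is based on \cite[Corollary 23]{cm16}'', the cleanest presentation is simply to reduce to that corollary after the one-line observation that $\operatorname*{int}(\cap_{i=0}^{m-1}C_i)\neq\emptyset$ implies $\operatorname*{int}(\cap_{i=0}^{r-1}C_i)\neq\emptyset$; I would only spell out the self-contained equality-in-the-chain argument if a referee wanted the paper to be independent of \cite{cm16}.
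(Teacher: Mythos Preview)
Your proposal is correct and follows essentially the same approach as the paper: both observe that $\operatorname{int}(\cap_{i=0}^{m-1}C_i)\neq\emptyset$ implies $\operatorname{int}(\cap_{i=0}^{r-1}C_i)\neq\emptyset$ and then invoke \cite[Corollary~23]{cm16}. The paper packages this as the single chain $\operatorname{Fix}(\mathcal{T}_{C_0,\ldots,C_{r-1}})=\operatorname{Fix}(\mathcal{V}_{C_0,\ldots,C_{r-1}})=\cap_{i=0}^{r-1}\operatorname{Fix}(\mathcal{R}_{C_i})=\cap_{i=0}^{r-1}C_i$ rather than splitting into two inclusions, but the substance is identical; your optional self-contained equality-in-the-chain argument is an extra that the paper does not supply.
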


\begin{proof}
Obviously,%
\begin{equation}
\emptyset\neq\operatorname*{int}\left(  \cap_{i=0}^{m-1}C_{i}\right)
\subseteq\operatorname*{int}\left(  \cap_{i=0}^{r-1}C_{i}\right)  .
\end{equation}
Since%
\begin{equation}
\operatorname*{Fix}(\mathcal{T}_{C_{0},C_{1},\ldots,C_{r-1}}%
)=\operatorname*{Fix}(\mathcal{V}_{C_{0},C_{1},\cdots,C_{r-1}})=\cap
_{i=0}^{r-1}\operatorname*{Fix}(\mathcal{R}_{C_{i}})=\cap_{i=0}^{r-1}C_{i},
\label{eq:Fix_r}%
\end{equation}
combining the above, we get (\ref{eq:20}) as desired.\bigskip
\end{proof}

The alternative convergence result of Algorithm \ref{alg:Gen-ALG} follows.

\begin{theorem}
\label{The:Main} Let $C_{i}\subseteq\mathcal{H},$ for $i=0,\ldots,m-1,$ be
nonempty, closed and convex sets. If $\operatorname*{int}\left(  \cap
_{0=1}^{m-1}C_{i}\right)  \neq\emptyset$ then any sequence $\left\{
x^{k}\right\}  _{k=0}^{\infty},$ generated by Algorithm \ref{alg:Gen-ALG} with
$S_{k}$ replaced by $Q$ as in (\ref{Eq:Op_S1}), converges weakly to a point
$x^{\ast}$ which solves the convex feasibility problem (\ref{P:CFP}).
\end{theorem}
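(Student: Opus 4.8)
The plan is to verify that the composite operator $Q = S_m \cdots S_2 S_1$ falls within the scope of Opial's Theorem (Theorem \ref{The:Opial}), i.e., that $Q$ is averaged with nonempty fixed-point set, and that $\operatorname{Fix}(Q) = \cap_{i=0}^{m-1} C_i$. Once these two facts are in place, the iteration $x^{k+1} = Q(x^k)$ converges weakly to some $x^\ast \in \operatorname{Fix}(Q) = \cap_{i=0}^{m-1}C_i$, which is exactly a solution of (\ref{P:CFP}).

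First I would address the averagedness. Each $S_d = \mathcal{T}_{C_{m,r}(d)}$ is, by Definition \ref{def:m-dr} together with the fact that a composition of reflections is nonexpansive, a firmly nonexpansive operator, hence $1/2$-averaged by Lemma \ref{Lem:av}(i). Applying Lemma \ref{Lem:av}(ii) inductively to the composition of $m$ such $1/2$-averaged operators yields that $Q$ is $c$-averaged for some explicit $c \in (0,1)$ (one gets $c = 1 - 2^{-m}$ by iterating the recursion $c_{\mathrm{new}} = c_1 + c_2 - c_1 c_2$ starting from $1/2$), in particular $Q$ is nonexpansive.

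The crux of the argument is the fixed-point identity $\operatorname{Fix}(Q) = \cap_{i=0}^{m-1} C_i$, and here is where the hypothesis $\operatorname{int}\big(\cap_{i=0}^{m-1} C_i\big) \neq \emptyset$ is essential. The inclusion $\cap_{i=0}^{m-1} C_i \subseteq \operatorname{Fix}(Q)$ is easy: if $x \in \cap_{i=0}^{m-1} C_i$, then $x$ is fixed by every reflection $\mathcal{R}_{C_i}$, hence by every composite reflection $\mathcal{V}_{C_{m,r}(d)}$, hence by every $S_d$, hence by $Q$. For the reverse inclusion I would invoke Lemma \ref{Lem:av}(iii) in its finitely-many-operators form: each $S_d$ is averaged, and by the preceding Lemma (the one based on \cite[Corollary 23]{cm16}) we have $\operatorname{Fix}(S_d) = \cap_{i \in I_d} C_i$ where $I_d$ indexes the $r$ sets appearing in $C_{m,r}(d)$. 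The nonemptiness of the interior of $\cap_{i=0}^{m-1}C_i$ guarantees $\cap_{d=1}^{m} \operatorname{Fix}(S_d) \supseteq \cap_{i=0}^{m-1}C_i \neq \emptyset$, so Lemma \ref{Lem:av}(iii) applies and gives $\operatorname{Fix}(Q) = \operatorname{Fix}(S_m \cdots S_1) = \cap_{d=1}^{m} \operatorname{Fix}(S_d)$. The final step is the combinatorial observation — already flagged in the text around (\ref{eq:3.7}) — that as $d$ ranges over $1, \ldots, m$, the blocks $C_{m,r}(d)$ together enumerate all of $C_0, \ldots, C_{m-1}$ (indeed the string (\ref{eq:3.7}) is exactly $r-1$ full copies of the family), so that $\cap_{d=1}^{m}\big(\cap_{i \in I_d} C_i\big) = \cap_{i=0}^{m-1} C_i$.

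The main obstacle I anticipate is precisely this last bookkeeping step: one must check carefully that the index sets $I_d = \{(r-1)(d-1) + j \bmod m : j = 0, 1, \ldots, r-1\}$, as $d$ runs from $1$ to $m$, cover $\{0, 1, \ldots, m-1\}$ — which follows because consecutive blocks overlap in their endpoint (the Borwein–Tam device of reusing the last set as the next first set), so the union of the starting indices $\{(r-1)(d-1) \bmod m\}$ already sweeps all residues as $d$ goes $1$ to $m$. Everything else is an assembly of the cited results: once $Q$ is averaged with $\operatorname{Fix}(Q) = \cap_{i=0}^{m-1}C_i \neq \emptyset$, Theorem \ref{The:Opial} delivers weak convergence of $\{x^k\}$ to a point of $\operatorname{Fix}(Q)$, completing the proof. $\blacksquare$
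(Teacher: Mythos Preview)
Your proof follows the same route as the paper's: establish that each $S_d$ is firmly nonexpansive and hence $Q$ is averaged, invoke Lemma~\ref{Lem:av}(iii) together with the preceding lemma to obtain $\operatorname{Fix}(Q)=\cap_{d=1}^{m}\operatorname{Fix}(S_d)=\cap_{i=0}^{m-1}C_i$, and conclude via Opial's Theorem~\ref{The:Opial}. The only slip is your justification of the coverage step: the starting indices $\{(r-1)(d-1)\bmod m: d=1,\ldots,m\}$ do \emph{not} in general sweep all residues (take $m=4$, $r=3$, where they yield only $0$ and $2$); what is true, and what you need, is that the overlapping blocks $I_d$ concatenate to cover the consecutive integers $0,1,\ldots,m(r-1)$, which already contains a full residue system modulo $m$ since $r\geq 2$.
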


\begin{proof}
Let $r\in\{2,3,\ldots,m-1\}$. Since the operator $\mathcal{V}_{C_{1}%
,C_{2},\cdots,C_{r}}$ is nonexpansive, $\mathcal{T}_{C_{1},C_{2},\ldots,C_{r}%
}$ is firmly-nonexpansive, i.e., $1/2$-averaged. Since composition of averaged
operators is averaged, we get that any operator $S_{d}$ (\ref{Oper:S}) is
averaged and so is also $Q$ of (\ref{Eq:Op_S1}).

Next, we study $\operatorname*{Fix}(Q)$. Since $\cap_{i=0}^{m-1}C_{i}%
\neq\emptyset$, Lemma \ref{Lem:av}(iii) and (\ref{eq:20}) yield%
\begin{equation}
\operatorname*{Fix}(Q)=\operatorname*{Fix}(S_{m}\cdots S_{2}S_{1})=\cap
_{d=1}^{m}\operatorname*{Fix}S_{d}=\cap_{i=0}^{m-1}C_{i}. \label{eq:3.12}%
\end{equation}
The rest of the proof follows directly from the Opial theorem (Theorem
\ref{The:Opial} above) and the proof is complete.
\end{proof}

\begin{remark}
\begin{description}
  \item[(i)] In the finite-dimensional case, Theorem \ref{The:Main} implies also convergence of Algorithm \ref{alg:Gen-ALG} with $\{S_k\}_{k=1}^\infty$. 
  \item[(ii)] In the special case when $\frac{{\displaystyle}m}{{\displaystyle}r-1}=n\in\mathbb{N}$ one can define the operator $\widetilde{Q}:=S_{n}\cdots S_{2}S_{1},$ which means that $\widetilde{Q}$ preforms one full \textquotedblleft sweep\textquotedblright\ over the sets $C_{m-1},\dots,C_{0},$ and use it instead of $Q.$
\end{description}
\end{remark}

\begin{definition}
Let $\mathbb{N}$ be the set of natural numbers, $\{h_{1},h_{2},\ldots\}$ be a
sequence of operators, and $r:\mathbb{N}\rightarrow\mathbb{N}$. An
unrestricted (or random) product of these operators is the sequence
$\{S_{n}\}_{n\in\mathbb{N}}$ defined by $S_{n}:=h_{r(n)}h_{r(n-1)}\cdots
h_{r(1)}$.
\end{definition}

We recall the following result by Dye and Reich.

\begin{theorem}
\label{Th:DR}\cite[Theorem 1]{dr92} Let $T_{1}:\mathcal{H}\rightarrow
\mathcal{H}$ and $T_{2}:\mathcal{H}\rightarrow\mathcal{H}$ be two (W)
nonexpansive mappings on a Hilbert space $\mathcal{H}$, whose fixed point sets
have a nonempty intersection. Then any random product $\{S_{n}\}_{n\in
\mathbb{N}}$, from $T_{1}$ and $T_{2}$ converges weakly (to a common fixed point).
\end{theorem}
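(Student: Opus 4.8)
The plan is to run the classical Fej\'er\,/\,Opial scheme, with Condition (W) supplying the only non-routine ingredient. Write $x^{n+1}:=T_{r(n+1)}(x^{n})$ for the iterates of the random product starting from an arbitrary $x^{0}$, and put $F:=\operatorname*{Fix}(T_{1})\cap\operatorname*{Fix}(T_{2})$; this set is nonempty by hypothesis and is closed and convex, since the fixed point set of a nonexpansive self-map of a Hilbert space is closed and convex. I would assume throughout that each of $T_{1},T_{2}$ is applied infinitely often, which is needed for the limit to be a \emph{common} fixed point: if, say, $T_{1}=\Id$ and $r\equiv 1$, then $x^{n}\equiv x^{0}$, which lies in $\operatorname*{Fix}(T_{2})$ only exceptionally.

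First, since each $T_{i}$ is nonexpansive and fixes every point of $F$, it is quasi-nonexpansive, so $\{x^{n}\}$ is Fej\'er-monotone with respect to $F$; hence it is bounded and, for every $z\in F$, $\|x^{n}-z\|$ decreases to a limit $\mu_{z}$, whence $\|x^{n}-z\|-\|x^{n+1}-z\|\to0$. Next I would extract weak asymptotic regularity. Split $\mathbb{N}$ according to whether $r(n+1)=1$ or $2$; on the sub-indices with $r(n+1)=i$ one has $x^{n+1}=T_{i}(x^{n})$ and $z=T_{i}(z)$, so $\|x^{n}-z\|-\|T_{i}x^{n}-T_{i}z\|\to0$ with $\{x^{n}-z\}$ bounded, and Condition (W) for $T_{i}$ (with the second sequence constantly equal to $z$) gives $x^{n}-x^{n+1}=(x^{n}-z)-(T_{i}x^{n}-T_{i}z)\rightharpoonup0$ along that sub-index set. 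As both values of $i$ are covered, a routine subsequence argument yields $x^{n}-x^{n+1}\rightharpoonup0$, hence $x^{n}-x^{n+\ell}\rightharpoonup0$ for every fixed $\ell$.

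The core step is to show that every weak cluster point of $\{x^{n}\}$ lies in $F$. Suppose $x^{n_{j}}\rightharpoonup p$. Expanding $\|x^{n_{j}}-z\|^{2}$ around $p$ and letting $j\to\infty$ shows $\|x^{n_{j}}-p\|^{2}\to\mu_{z}^{2}-\|p-z\|^{2}=:c\ge0$, a value independent of $z\in F$, and by the previous paragraph the same limit is obtained along any index sequence gotten from $\{n_{j}\}$ by a fixed bounded shift. Using that $T_{i}$ is applied infinitely often, I would locate such a shift $\ell_{0}$ and pass to a subsequence so that, with $u^{j}:=x^{n_{j}+\ell_{0}}$, one has $u^{j}\rightharpoonup p$, $T_{i}u^{j}=x^{n_{j}+\ell_{0}+1}\rightharpoonup p$, and $\|u^{j}-z\|,\|T_{i}u^{j}-z\|\to\mu_{z}$, so that $\|u^{j}-p\|\to\sqrt{c}$ and $\|T_{i}u^{j}-p\|\to\sqrt{c}$. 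If $T_{i}p\neq p$, Opial's inequality applied to $T_{i}u^{j}\rightharpoonup p$ gives $\sqrt{c}<\liminf_{j}\|T_{i}u^{j}-T_{i}p\|\le\lim_{j}\|u^{j}-p\|=\sqrt{c}$, a contradiction; hence $T_{i}p=p$ for $i=1,2$, i.e. $p\in F$. Finally, $\{x^{n}\}$ is Fej\'er-monotone with respect to the closed convex set $F$ and all of its weak cluster points lie in $F$, so by the standard Fej\'er\,/\,Opial lemma (see \cite{Cegielski12}) it converges weakly to some $x^{\ast}\in F$, a common fixed point.

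The main obstacle is this core step, and inside it the bookkeeping that produces a \emph{bounded} shift $\ell_{0}$ placing an application of $T_{i}$ just after an index along which the iterates still converge weakly to $p$. When consecutive occurrences of each index have bounded gaps this is immediate; in general one must separately analyse the possibly long stretches on which only one of the two operators acts, invoking on each such stretch the weak convergence of the iterates of that single Condition-(W) operator. This is precisely where Condition (W) earns its keep; everywhere else the argument is the textbook Fej\'er\,/\,Opial machinery already used for Theorems \ref{The:Opial} and \ref{The:Gen_Opial}.
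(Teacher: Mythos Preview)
The paper does not prove this statement: Theorem~\ref{Th:DR} is quoted verbatim from Dye and Reich \cite[Theorem~1]{dr92} and used as a black box in the proof of Theorem~\ref{The:Main2}. There is therefore no ``paper's own proof'' against which to compare your attempt.

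That said, your sketch is structurally sound up to the point you yourself flag. Fej\'er monotonicity with respect to $F$, the derivation of weak asymptotic regularity $x^{n}-x^{n+1}\rightharpoonup 0$ from Condition~(W) applied with the constant sequence $y^{k}\equiv z\in F$, and the Opial-type contradiction argument are all correct. The genuine gap is exactly where you locate it: passing from an arbitrary weak cluster point $p$ of $\{x^{n}\}$ to indices at which a \emph{given} $T_{i}$ acts. Your argument requires a \emph{fixed} shift $\ell_{0}$ so that $r(n_{j}+\ell_{0}+1)=i$ along a subsequence, which is available only when the gaps between successive applications of $T_{i}$ are bounded. In the unrestricted setting those gaps may grow without bound, and then $x^{n}-x^{n+\ell}\rightharpoonup 0$ for each fixed $\ell$ no longer suffices to transport the weak limit $p$ to the relevant indices. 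Your closing remark --- analyse the long monochromatic stretches via the single-operator theory --- points in the right direction but is not yet a proof: one must show that the weak limits produced inside those stretches coincide with $p$, and this coupling is precisely the substance of the Dye--Reich argument in \cite{dr92}. In short, the routine Fej\'er/Opial machinery you invoke handles everything except the one step that makes the theorem non-trivial, which is why the paper cites the result rather than reproving it.
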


With the aid of this theorem we can prove that products of projection
operators may be interlaced between the $r$-sets-DR operators in Algorithm
\ref{alg:Gen-ALG}.

\begin{theorem}
\label{The:Main2} Let $C_{i}\subseteq\mathcal{H},$ for $i=0,1,\ldots,m-1,$ be
nonempty, closed and convex sets with $\operatorname*{int}\left(  \cap
_{0=1}^{m-1}C_{i}\right)  \neq\emptyset$. Given the operators $T_{1}=Q$ (where
$Q$ is defined in (\ref{Eq:Op_S1})) and $T_{2}=P_{C_{0}}P_{C_{1}}\cdots
P_{C_{m-1}}$, any sequence $\left\{  x^{k}\right\}  _{k=0}^{\infty}$,
generated by any random product from $T_{1}$ and $T_{2},$ converges weakly to
a point $x^{\ast}$ which solves the CFP (\ref{P:CFP}).
\end{theorem}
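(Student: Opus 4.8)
The plan is to verify the hypotheses of the Dye--Reich theorem (Theorem \ref{Th:DR} above) for the two operators $T_1 = Q$ and $T_2 = P_{C_0}P_{C_1}\cdots P_{C_{m-1}}$, and then simply invoke that theorem. There are exactly two things to check: first, that each of $T_1$ and $T_2$ is a nonexpansive operator satisfying Condition (W); and second, that $\operatorname{Fix}(T_1) \cap \operatorname{Fix}(T_2) \neq \emptyset$. Once both are established, Theorem \ref{Th:DR} delivers weak convergence of any random product $\{S_n\}_{n\in\mathbb N}$ to a common fixed point, and it remains only to identify that common fixed point as an element of $C = \cap_{i=0}^{m-1} C_i$.

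First I would record that $T_2 = P_{C_0}\cdots P_{C_{m-1}}$ is a composition of metric projections, each of which is firmly nonexpansive, hence $1/2$-averaged (Lemma \ref{Lem:av}(i)); by Lemma \ref{Lem:av}(ii) iterated, $T_2$ is averaged, and by Lemma \ref{Lem:av}(iv) it is strongly nonexpansive and therefore satisfies Condition (W). For $T_1 = Q = S_m\cdots S_2 S_1$, I would recall from the proof of Theorem \ref{The:Main} that each $S_d = \mathcal{T}_{C_{m,r}(d)}$ is firmly nonexpansive (being $\tfrac12(\Id + \mathcal V)$ with $\mathcal V$ a composition of reflections, hence nonexpansive), so again by Lemma \ref{Lem:av}(ii) the composition $Q$ is averaged, hence satisfies Condition (W) by Lemma \ref{Lem:av}(iv). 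Both operators are in particular nonexpansive.

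Next I would compute the two fixed-point sets and show they share the point set $C$. For $T_2$, since $\operatorname{int}(\cap_{i=0}^{m-1} C_i)\neq\emptyset$ guarantees $C \neq \emptyset$, Lemma \ref{Lem:av}(iii) (in its finite-composition form, applied to the averaged operators $P_{C_i}$, whose individual fixed point sets $C_i$ have common intersection $C$) gives $\operatorname{Fix}(T_2) = \operatorname{Fix}(P_{C_0}\cdots P_{C_{m-1}}) = \cap_{i=0}^{m-1}\operatorname{Fix}(P_{C_i}) = \cap_{i=0}^{m-1} C_i = C$. For $T_1 = Q$, this is exactly equation \eqref{eq:3.12} established in the proof of Theorem \ref{The:Main}: $\operatorname{Fix}(Q) = \cap_{i=0}^{m-1} C_i = C$. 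Hence $\operatorname{Fix}(T_1)\cap\operatorname{Fix}(T_2) = C \neq \emptyset$, so Theorem \ref{Th:DR} applies and every random product converges weakly to some $x^\ast \in \operatorname{Fix}(T_1)\cap\operatorname{Fix}(T_2) = C$, i.e.\ $x^\ast$ solves \eqref{P:CFP}.

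I do not anticipate a genuine obstacle here, since the result is essentially an application of Theorem \ref{Th:DR} with bookkeeping that has already been done inside the proofs of Theorems \ref{The:Main} and \ref{The:Main2}'s predecessors. The one point requiring a little care is making sure the hypothesis $1 < r \le m-1$ (so that the $r$-sets-DR operators and the set $C_{m,r}(d)$ are well defined and the identity $\operatorname{Fix}(\mathcal T_{C_0,\dots,C_{r-1}}) = \cap_{i=0}^{r-1} C_i$ from \eqref{eq:20} is available) is carried along from the earlier results, and that the equality $\operatorname{Fix}(Q) = C$ from \eqref{eq:3.12} is quoted with its interior-of-intersection assumption intact; both are guaranteed by the standing hypothesis $\operatorname{int}(\cap_{i=0}^{m-1} C_i)\neq\emptyset$ of the present theorem.
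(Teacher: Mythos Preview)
Your proposal is correct and follows essentially the same approach as the paper: verify that both $T_1$ and $T_2$ are averaged (hence satisfy Condition (W)), identify $\operatorname{Fix}(T_1)=\operatorname{Fix}(T_2)=\cap_{i=0}^{m-1}C_i$ via \eqref{eq:3.12} and Lemma~\ref{Lem:av}(iii), and then invoke Theorem~\ref{Th:DR}. If anything, your write-up is slightly more careful, correctly noting that $Q$ is averaged rather than $1/2$-averaged.
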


\begin{proof}
By (\ref{eq:3.12})%
\begin{equation}
\operatorname*{Fix}(T_{1})=\operatorname*{Fix}(Q)=\cap_{i=0}^{m-1}C_{i}%
\end{equation}
and clearly also%
\begin{equation}
\operatorname*{Fix}(T_{2})=\operatorname*{Fix}(P_{C_{0}}P_{C_{1}}\cdots
P_{C_{m-1}})=\cap_{i=0}^{m-1}C_{i},
\end{equation}
yielding $\operatorname*{Fix}(T_{1})\cap\operatorname*{Fix}(T_{2})=\cap
_{i=0}^{m-1}C_{i}\neq\emptyset$. Since (see the proof of Theorem
\ref{The:Main}) the operator $Q$ is $1/2$-averaged we use Lemma \ref{Lem:av}%
(iv), to know that it satisfies condition (W). Since $T_{2}$ is also averaged,
it also satisfies condition (W). Applying Theorem \ref{Th:DR} the desired
result is obtained.
\end{proof}

\begin{remark}
Theorem \ref{The:Main2} is established with $T_{2}=P_{C_{0}}P_{C_{1}}\cdots
P_{C_{m-1}}$, but as a matter of fact, any (W) nonexpansive operator can be
chosen as long as $\operatorname*{Fix}(T_{2})=\cap_{i=0}^{m-1}C_{i}$, for
example $\mathcal{T}_{C_{0},C_{1},\cdots,C_{m-1}}$ ((\ref{eq:r-sets-DR op.})
with $r=m$).
\end{remark}

\begin{remark}
In \cite{ac17} a generalized DR operator, called the \textit{averaged
alternating modified reflections }(AAMR) operator, is introduced. It allows to
choose any parameters $\alpha\in(0,1)$ and $\beta\in(0,1)$ in the operator
$\mathcal{T}_{A,B,\alpha,\beta}:\mathcal{H}\rightarrow\mathcal{H}$ given by%
\begin{equation}
\mathcal{T}_{A,B,\alpha,\beta}:=(1-\alpha)\Id+\alpha(2\beta P_{B}-\Id)(2\beta
P_{A}-\Id)
\end{equation}
where $A$ and $B$ are nonempty, closed and convex sets. We conjecture that our
analysis given here can be properly expanded to include $r$-sets-AAMR
operators but we leave it for future work.
In this respect, it is worthwhile to note that the condition $\operatorname*{int}\left(  \cap_{0=1}^{m-1}C_{i}\right)  \neq\emptyset$ seems to be too restrictive. Probably additional convergence properties can be derived by relaxing it. For instance, in finite-dimensions, under a less restrictive condition, linear convergence results are proved in \cite{dao2018} for the cyclic 2-sets-DR algorithm with a generalized DR operator.
\end{remark}

\section{Numerical demonstrations\label{sec:Num}}

We set out to investigate and verify whether $r$-sets-DR operators with $r>2$
in a cyclic DR algorithm applied to a CFP are advantageous in any way over
the cyclic DR algorithm with $r=2$ proposed in \cite[Section 3]{bt14}. %
Our numerical illustrations demonstrate the potential advantage of $r$-sets DR operators with $r>2$, especially when the number of sets is large.

Additionally, we included in our numerical experiments also the ``Product Space Douglas--Rachford'' algorithm, which is based on Pierra's product space formulation~\cite{Pierra}. The original $2$-sets DR algorithm is applied sequentially to the product set
\begin{equation}
\mathbf{C}:=\prod_{i=0}^{m-1} C_i
\end{equation}
and to the diagonal set
\begin{equation}
\mathbf{D}:=\{(x,x,\ldots,x)\in\mathcal{H}^m\mid x\in\mathcal{H}\}.
\end{equation}
The iterative process obtained in this way has the form
\begin{equation}\label{eq:product-DR}
x^{k+1}=\mathcal{T}_{\mathbf{C},\mathbf{D}}(x^k),
\end{equation}
where $\mathcal{T}_{\mathbf{C},\mathbf{D}}$ is the $2$-sets-DR operator as in~\eqref{eq:DR_orig}, see, e.g.,~\cite[Section~3]{artacho2013recent}.

We consider two types of CFPs, with linear and quadratic constraints. For each of these type of problems and each problem size, $10$ random problems were generated and solved independently.  Algorithm~\ref{alg:Gen-ALG} was run until the
stopping criterion
\begin{equation}\label{eq:stop_criterion}
\frac{\left\Vert x^{k+j}-x^{k+j-1}\right\Vert}{\left\Vert x^{k+j-1}\right\Vert}\leq10^{-12},\quad\text {for all }j=1,2,\ldots,\lceil m/r\rceil,
\end{equation}
was met. All the experiments were run in $\mathbb{R}^{n}$ (the $n$-dimensional Euclidean space) with $n=1000.$ Initialization vectors $x^{0}$ were generated by randomly uniformly picking their coordinates from the range $[-10,10].$
All codes were written in Python~2.7 and the tests were run on an Intel Core i7-4770 CPU \@3.40GHz with 32GB RAM, under Windows 10 (64-bit).

\begin{example}[Linear CFPs]\label{Ex:linear}
In this example we consider solving a system of linear equations $Ax=\textcolor{blue}{0_m}$, where $A\in\mathbb{R}^{m\times n}$, $x\in\mathbb{R}^{n}$ and $0_m\in\mathbb{R}^{m}$. Since in real-life, experiments and measurements often come with ``noise'', we investigate the performances of our algorithm for solving the perturbed system of linear inequalities $-b_i\leq \langle a^i,x\rangle\leq b_i$, $i=1,2,\ldots,m$. The coordinates of $a^i$ were randomly uniformly generated in $[-1,1]$ and then the vectors were normalized, and $b_i$ was randomly uniformly chosen in~$[0,0.1]$.
\end{example}

\begin{example}[Quadratic CFPs]\label{Ex:quadratic}  In this example we
followed the experimental setup in \cite[Section 5]{bt14} and generated CFPs
consisting of balls of various sizes.
Each ball was created by picking a ball center $a^{i}$ with coordinates randomly uniformly
generated in the range $[-5,5].$ Then a radius $b_{i}:=\left\Vert
a^{i}\right\Vert +\alpha_{i}$ was defined by adding to the center's distance
from the origin $\left\Vert a^{i}\right\Vert $ a random number uniformly
picked from the range $[0,0.1]$ guaranteeing that the ball includes the
origin, thus, yielding a consistent CFP.
\end{example}

In our first experiment we compare the product space DR algorithm~\eqref{eq:product-DR} with our cyclic $r$-sets-DR Algorithm~\ref{alg:Gen-ALG} with different values of $r$. In Figure~\ref{fig:product} we show the running times of the different methods when the number of sets of the CFP varied from 50 to 1000. The stopping criterion~\eqref{eq:stop_criterion} was also used for the product space DR algorithm, but this time only for $j=1$. Note that a logarithmic scale was employed for the y-axis. We observe that for 1000~constraints, a number which is relatively small, the product space DR algorithm was nearly 100 times slower than each of the $r$-sets-DR methods. It is not difficult to understand the main reason why this happens: it requires to work in the product space $\mathbb{R}^{m\times n}$ instead of the original space $\mathbb{R}^n$.
\begin{figure}[htp]
\begin{center}
\begin{subfigure}{\textwidth}\centering
    \includegraphics[width=0.75\textwidth]{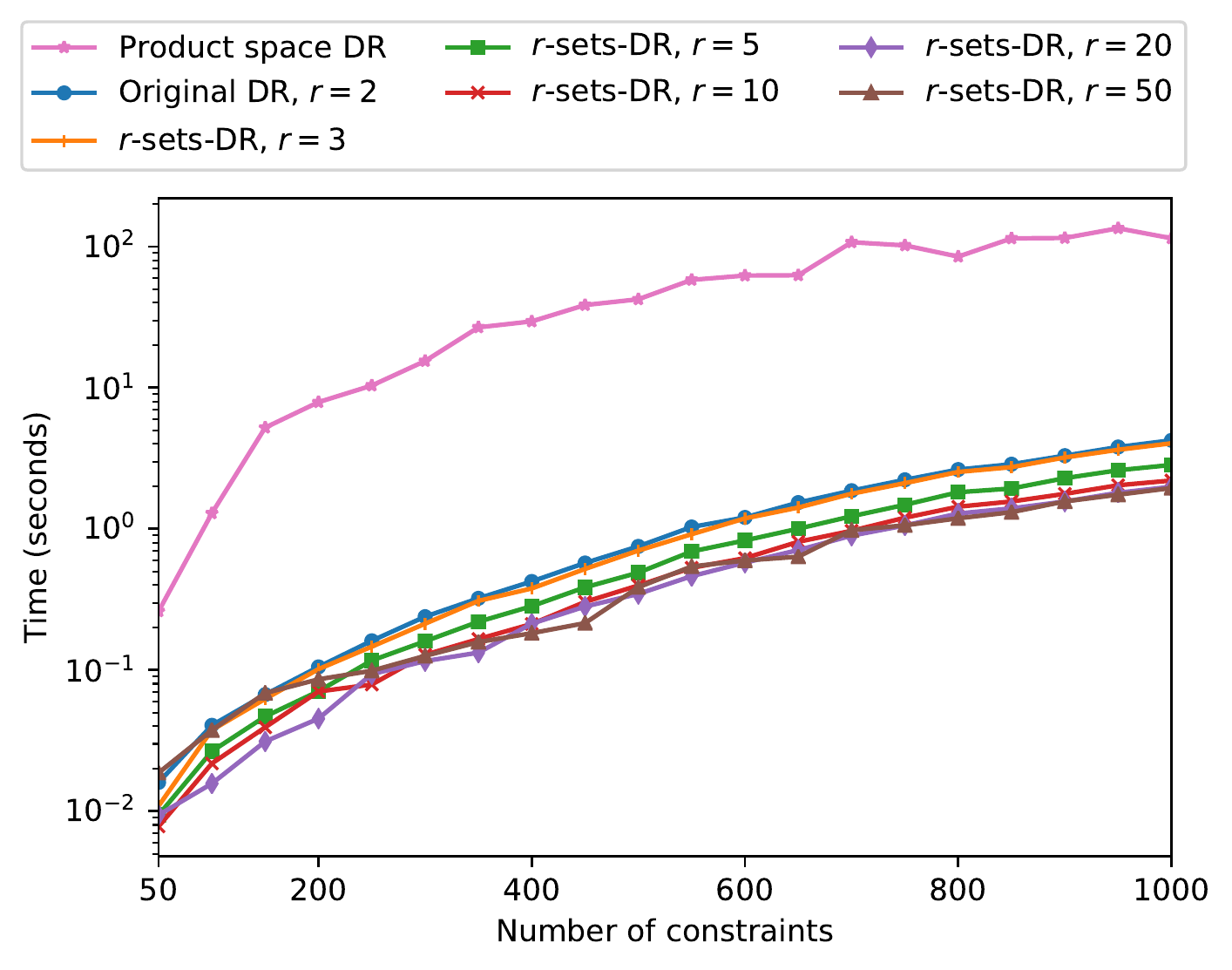}
    \caption{Linear CFPs}
\end{subfigure}
\begin{subfigure}{\textwidth}\centering
    \includegraphics[width=0.75\textwidth]{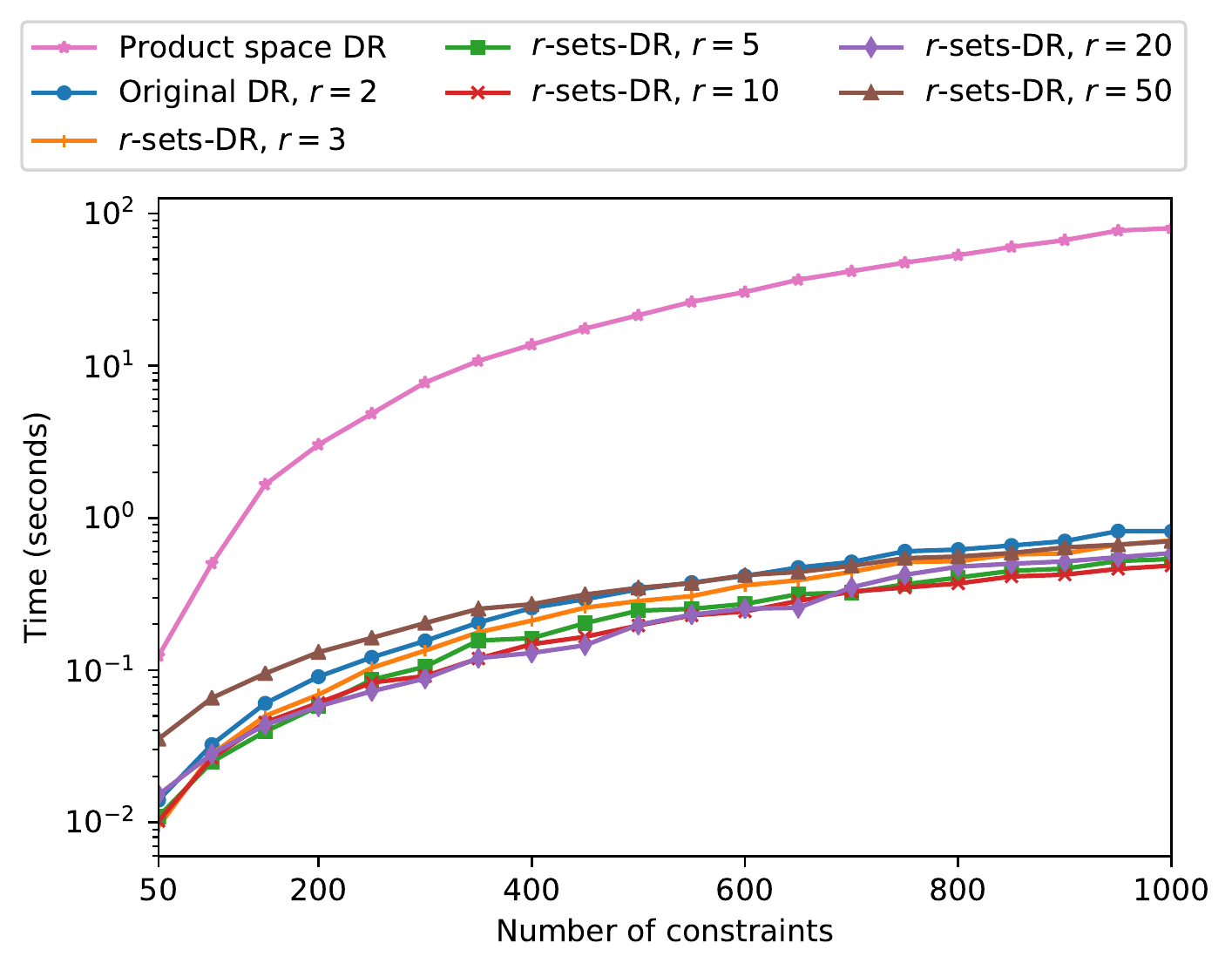}
    \caption{Quadratic CFPs}
\end{subfigure}
\end{center}
\caption{Runtimes in seconds averaged over 10 independent problems for
varying number of constraints. The product space DR algorithm is outperformed by the cyclic $r$-sets-DR algorithms.}\label{fig:product}
\end{figure}

In our second experiment, we compare our cyclic $r$-sets-DR methods for a wide range of constraints between 200 and 50,000. For each problem size, 10 independent random problems were tested. The averaged run-times are shown in Figure~\ref{fig:r-sets}. The performance profiles comparing the methods, shown in Figure~\ref{fig:pp}, were obtained as follows, see~\cite{DM02} and~\cite{bhl17}. Let $\mathcal{S}$ denote the set of all 6 solvers compared (namely, the original $2$-sets-DR scheme, and the cyclic $r$-sets-DR algorithm with $r=3,5,10,20,50$). Let $\mathcal{P}:=\{200,2500,5,000,\ldots,50,000\}$ be the set of problems. Let $t_{p,s}$ be the averaged time required to solve problem $p\in\mathcal{P}$ over the 10 random instances tested, by solver $s$.

For each problem $p$ and solver $s$, the \emph{performance ratio} is defined by
\begin{equation}
r_{p,s}:=\frac{t_{p,s}}{\min\{t_{p,s}\mid s\in \mathcal{S}\}}.
 \end{equation}
The \emph{performance profile} of a solver $s$ is a real-valued function $\pi_s:[1,+\infty)\to{[0,1]}$ defined by
\begin{equation}
\pi_s(\tau):=\frac{1}{|\mathcal{P}|}\left|\left\{p\in\mathcal{P}\mid r_{p,s}\leq \tau\right\}\right|,
\end{equation}
where $|\mathcal{P}|$ is the cardinality of the test set $\mathcal{P}$. This function indicates the probability that a performance ratio $r_{p,s}$ is within a factor $\tau$ of the best possible ratio. Thus, $\pi_s(1)$ represents the portion of problems for which solver $s\in\mathcal{S}$ has the best performance among all other solvers.

From Figures~\ref{fig:r-sets} and~\ref{fig:pp} we deduce that the cyclic DR algorithm with $r=2$ is clearly outperformed by the cyclic DR algorithms with the other $r=3,5,10,20,50$ $r$-sets-DR
operators. This trend seems even to grow and become more pronounced as the
problem sizes grow. The best performance for both linear and quadratic problems that were tested was achieved for $r=20$, closely followed by $r=10$. On average, these two algorithms were two times faster than the original cyclic DR algorithm.
\begin{figure}[htp]
\begin{center}
\begin{subfigure}{\textwidth}\centering
    \includegraphics[width=0.8\textwidth]{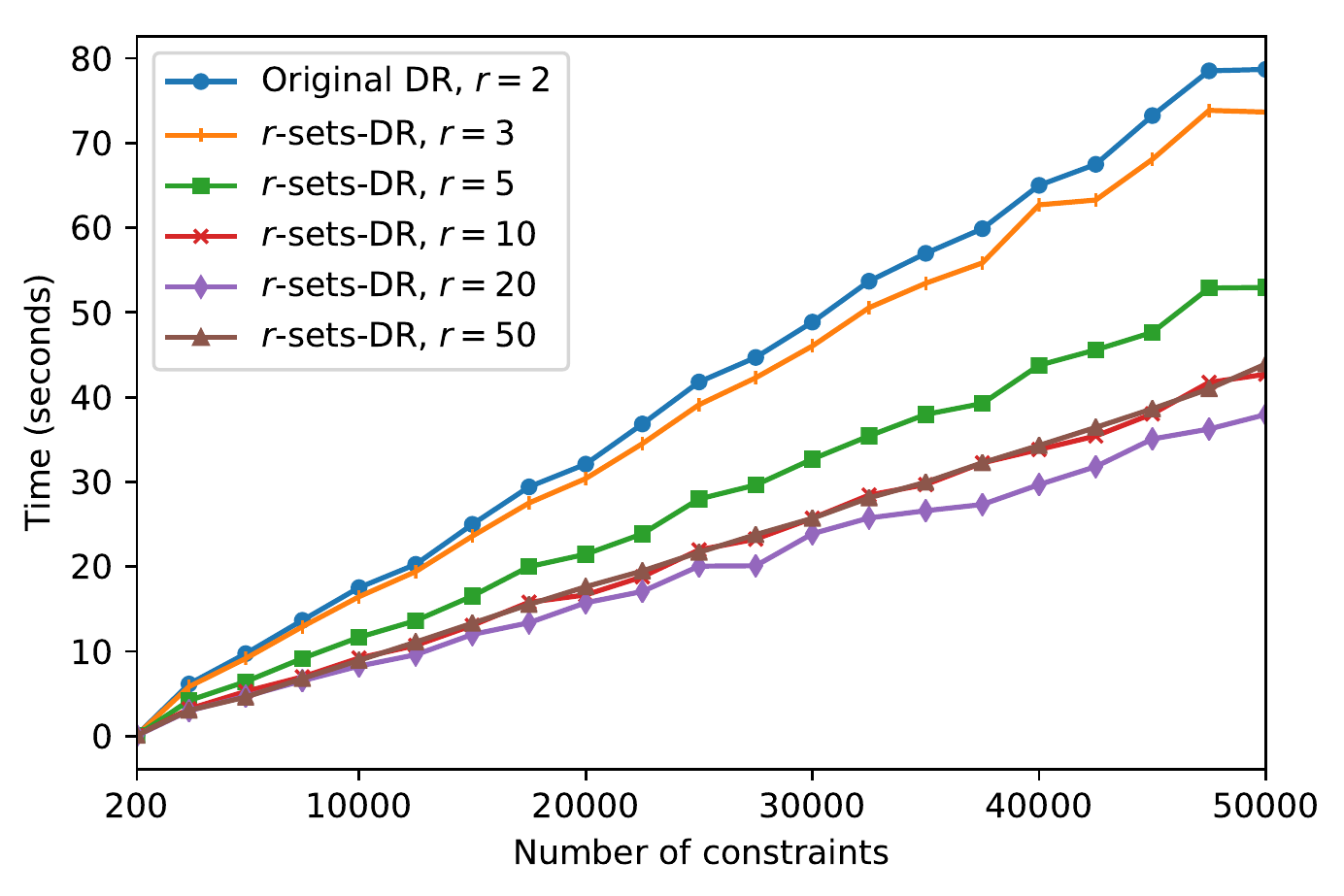}
    \caption{Linear CFPs}
\end{subfigure}
\begin{subfigure}{\textwidth}\centering
    \includegraphics[width=0.8\textwidth]{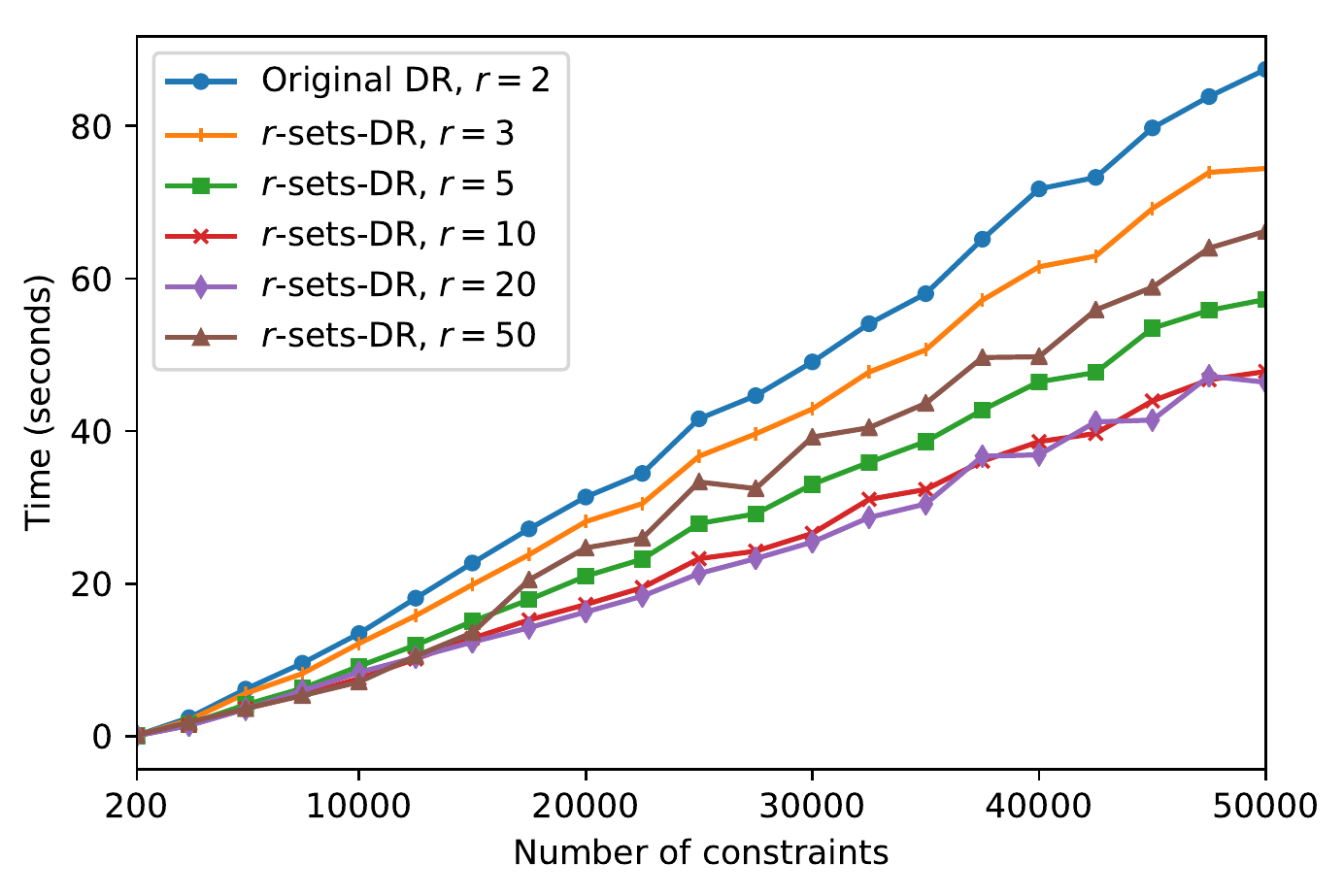}
    \caption{Quadratic CFPs}
\end{subfigure}
\caption{Runtimes in seconds averaged over 10 independent problems for
varying number of constraints. The cyclic DR algorithm with $r = 2$ is outperformed by the cyclic DR algorithms with the other $r = 3$, $5$, $10$, $20$, $50$ $r$-sets-DR operators.}
\label{fig:r-sets}
\end{center}
\end{figure}

\begin{figure}[htp]
\begin{center}
\begin{subfigure}{\textwidth}\centering
    \includegraphics[width=0.7\textwidth]{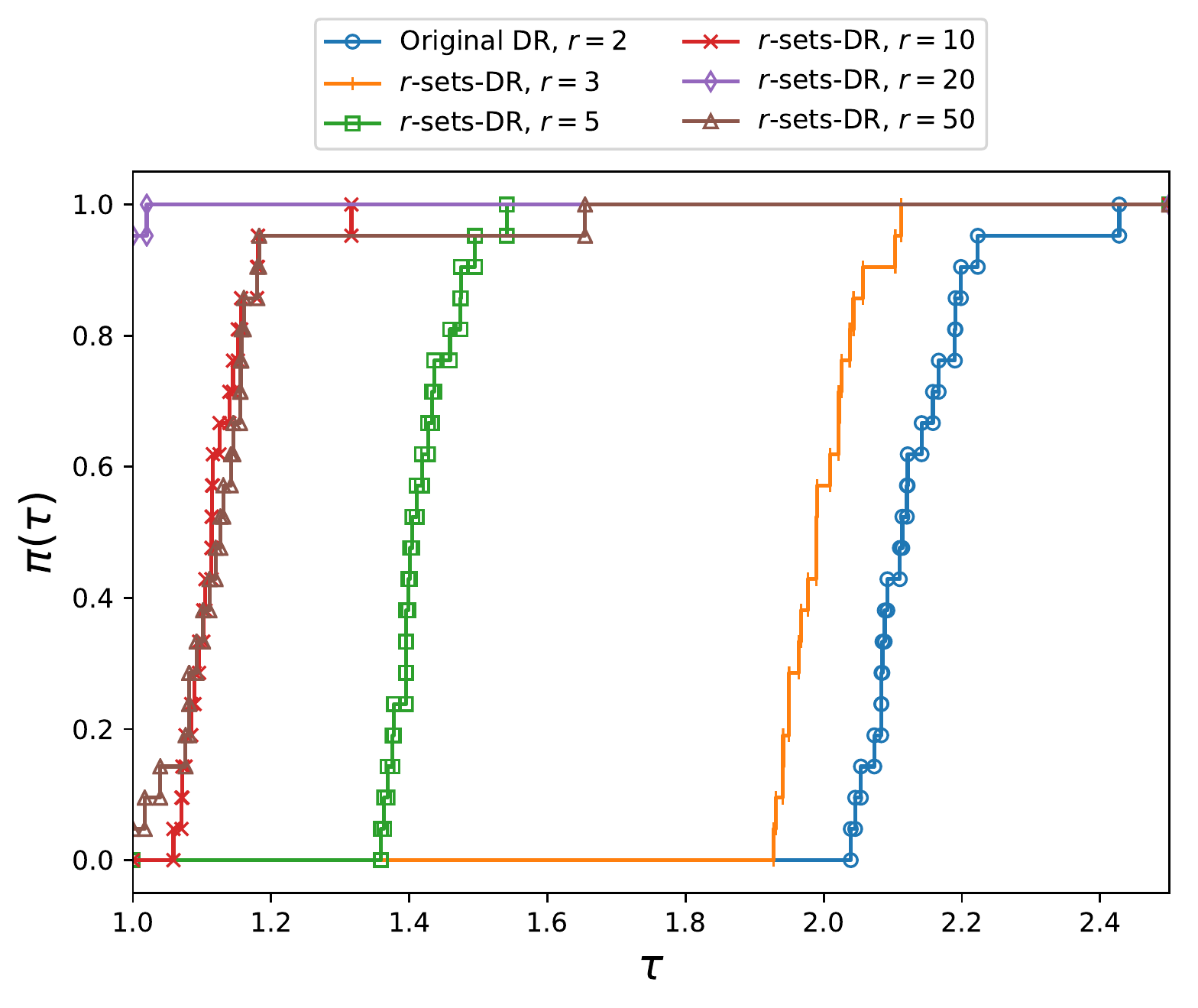}
    \caption{Linear CFPs}
\end{subfigure}
\begin{subfigure}{\textwidth}\centering
    \includegraphics[width=0.7\textwidth]{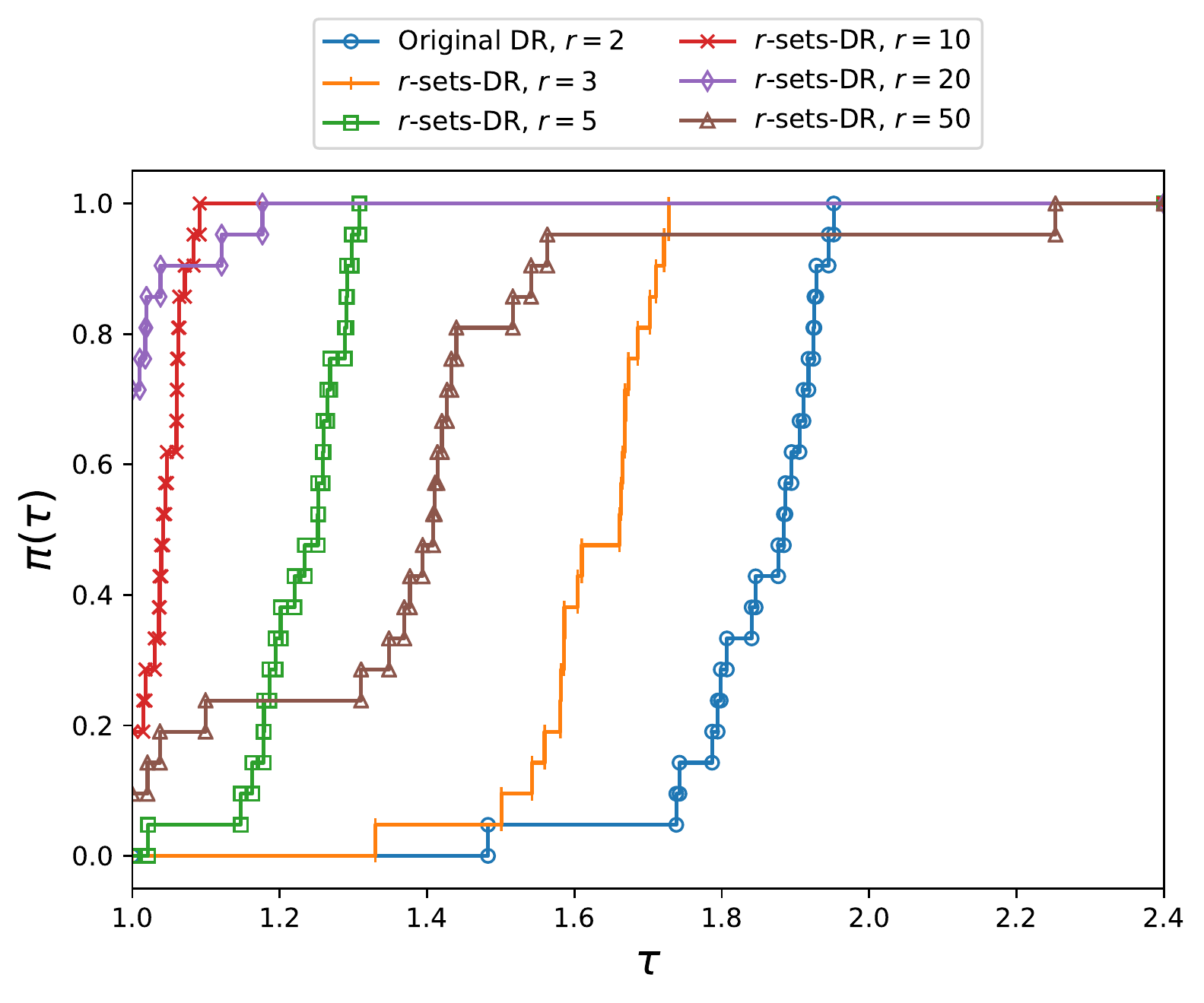}
    \caption{Quadratic CFPs}
\end{subfigure}
\caption{Performance profiles over 10 independent problems for varying number of constraints. The cyclic DR algorithm with $r = 2$ is outperformed by the cyclic DR algorithms with the other $r = 3$, $5$, $10$, $20$, $50$ $r$-sets-DR
operators.}
\label{fig:pp}
\end{center}
\end{figure}

In our last experiment, we compare the values of
\begin{equation}\label{eq:def_error}
{\rm Error}(x^k):=\sum_{i=0}^{m-1}\|P_{C_i}(x^k)-x^k\|
\end{equation}
with respect to the number of iterations and projections employed by each of the methods in one particular random experiment with 10,000 constraints. Of course, the larger $r$ is, the more projections the method uses to compute each iteration. The results, which are presented in Figure~\ref{fig:dist}, clearly show that the original cyclic DR scheme with $r=2$ uses two times more projections than the $r$-sets-DR method with $r=10,20$ or $50$ to achieve the same accuracy.
\begin{figure}[htp]
\begin{center}
\includegraphics[width=.85\textwidth]{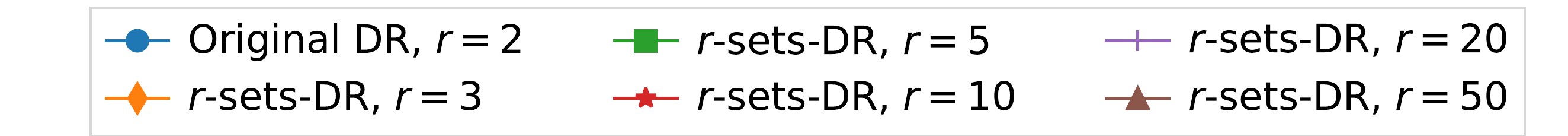}
\begin{subfigure}{0.49\textwidth}
    \includegraphics[width=\textwidth]{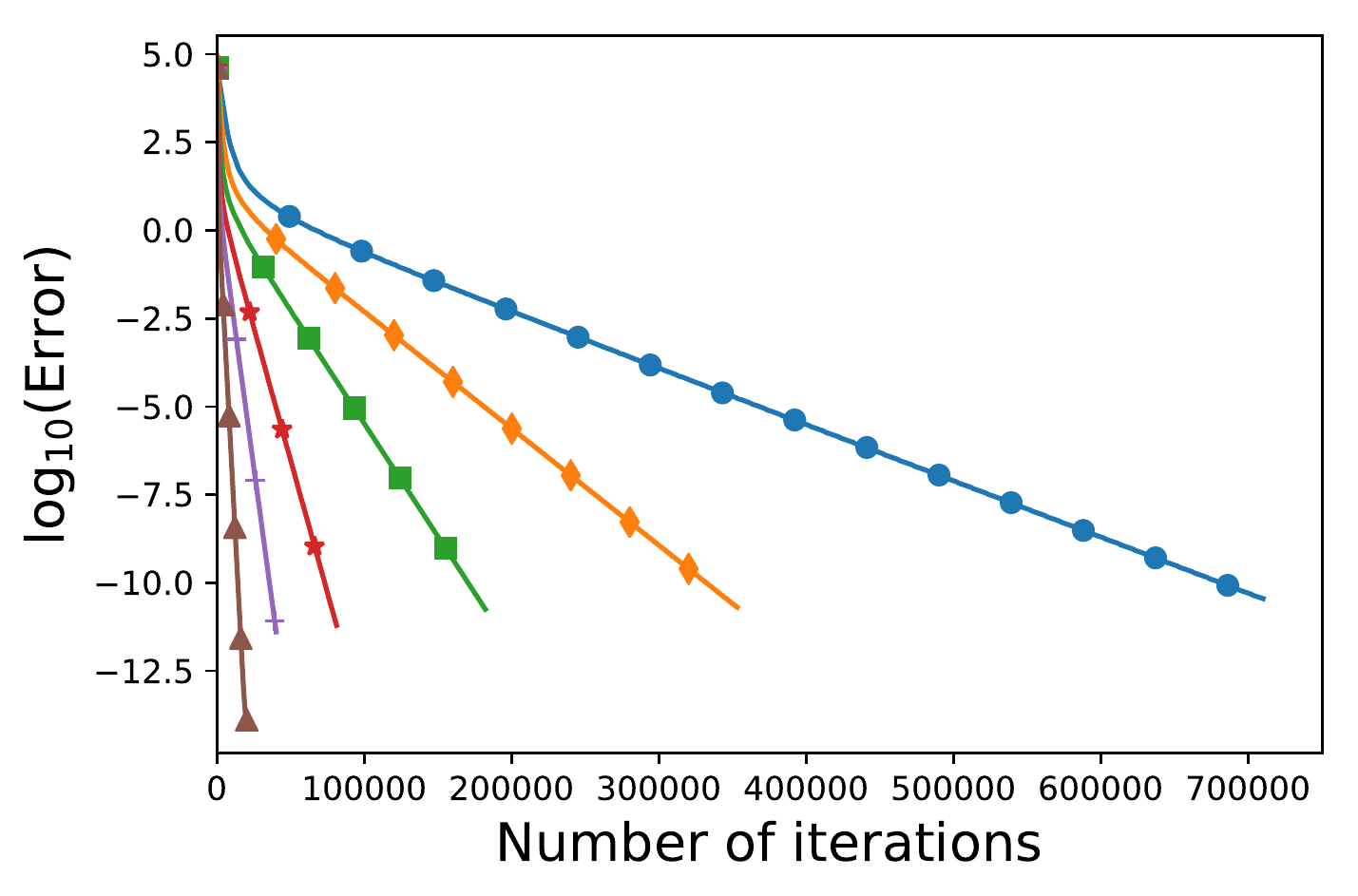}
    \caption{Linear CFPs (iterations)}
\end{subfigure}
\begin{subfigure}{0.49\textwidth}
    \includegraphics[width=\textwidth]{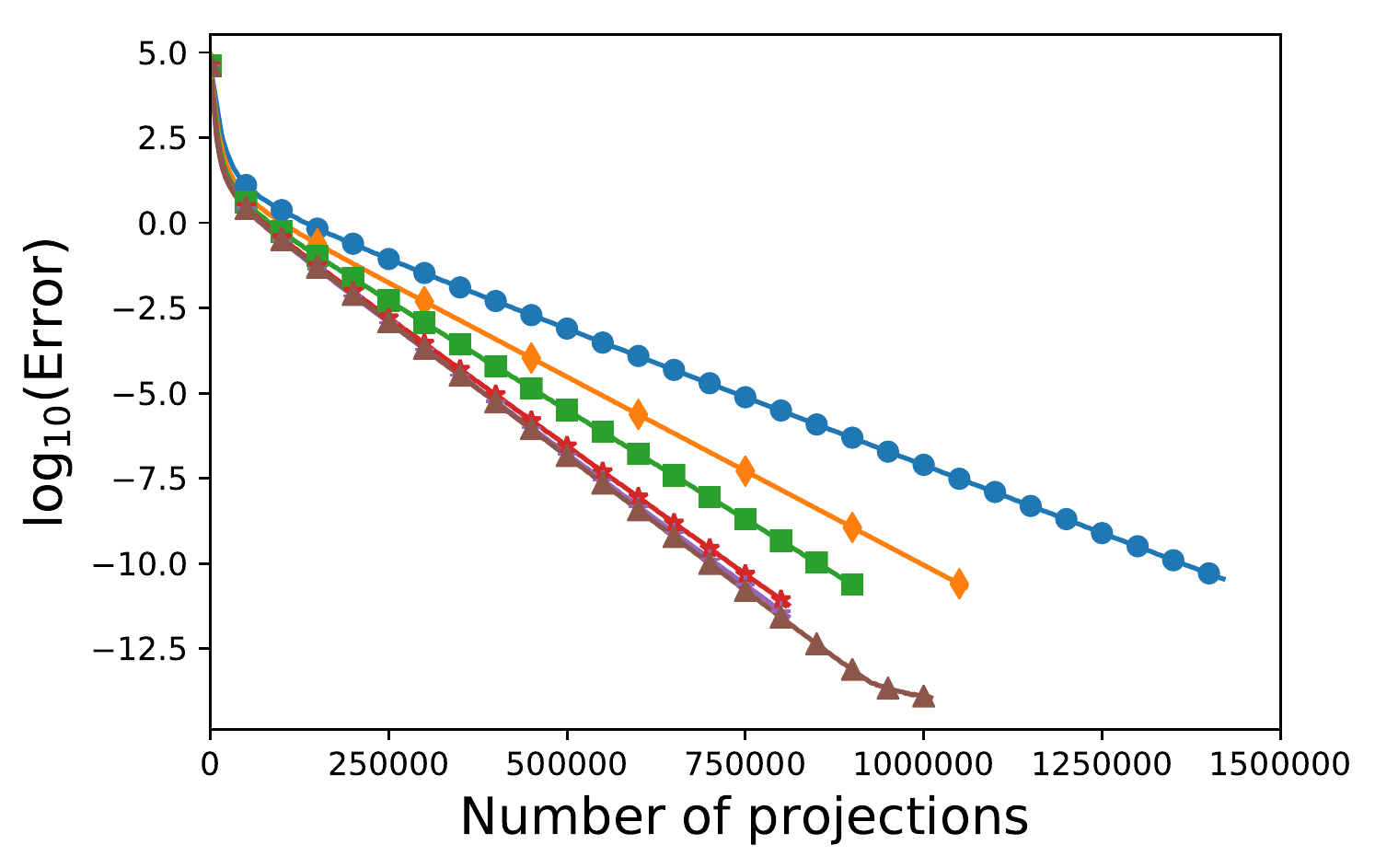}
    \caption{Linear CFPs (projections)}
\end{subfigure}
\begin{subfigure}{0.49\textwidth}
    \includegraphics[width=\textwidth]{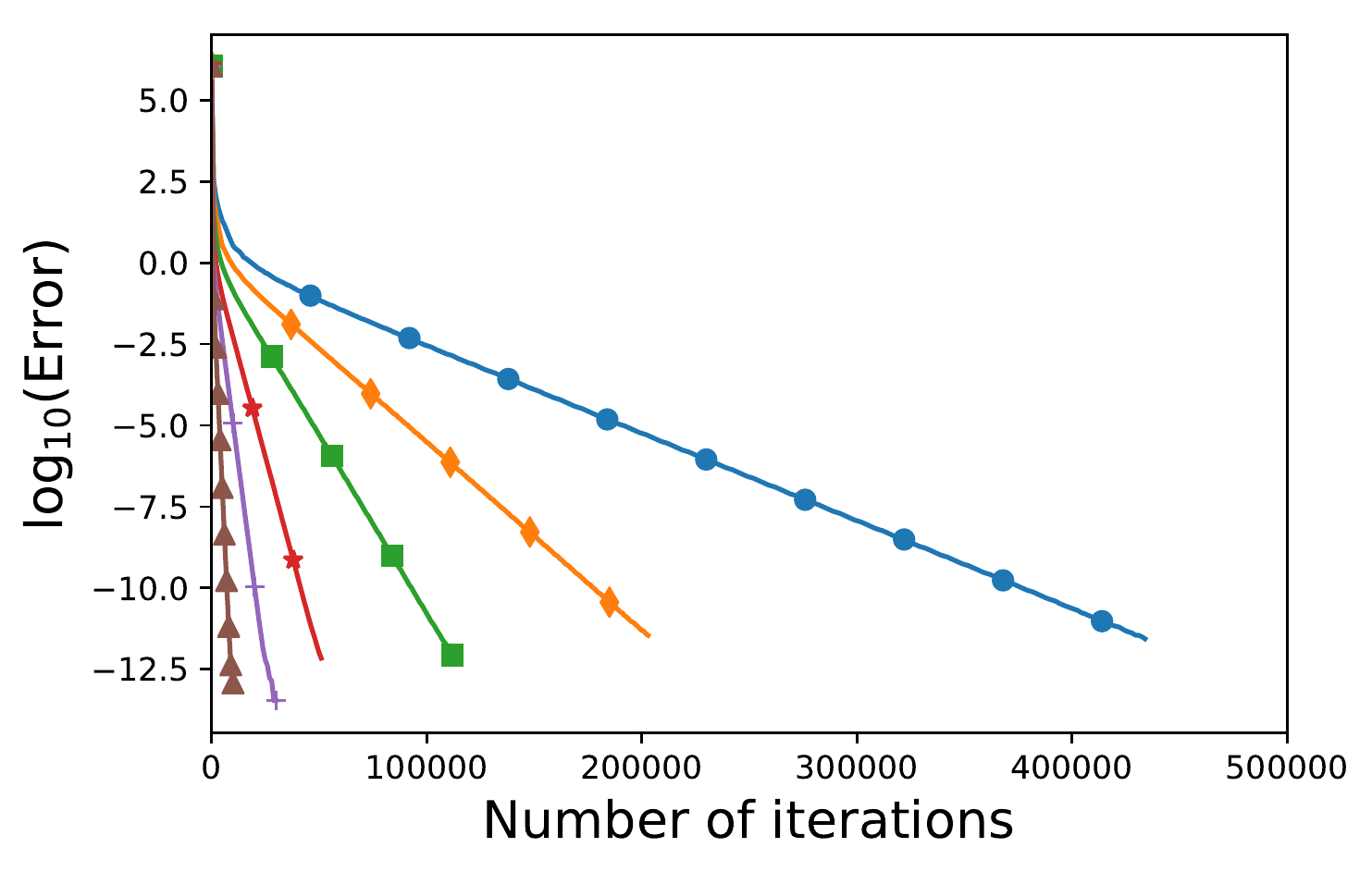}
    \caption{Quadratic CFPs  (iterations)}
\end{subfigure}
\begin{subfigure}{0.49\textwidth}
    \includegraphics[width=\textwidth]{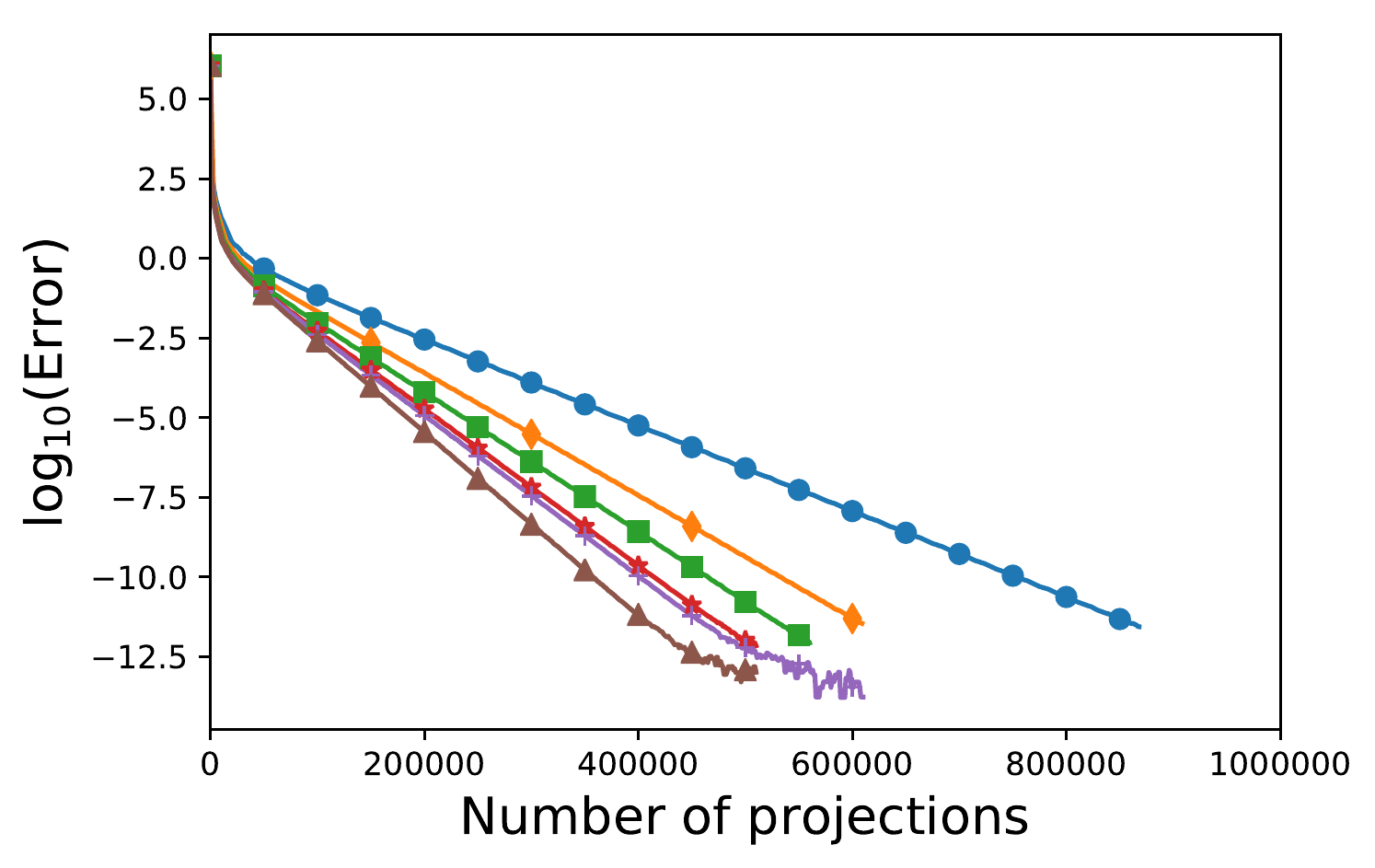}
    \caption{Quadratic CFPs (projections)}
\end{subfigure}
\caption{Comparison of the value of Error in~\eqref{eq:def_error} and the number of iterations and projections used for one randomly generated problem with 10,000 constraints. The original cyclic DR algorithm with $r=2$ needs more projections to achieve the same accuracy than the $r$-sets DR methods with $r>2$.}
\label{fig:dist}
\end{center}
\end{figure}

Extensive numerical study is called for, and indeed planned for future work, to explore further the computational aspects
of the of $r$-sets DR operators with $r>2$.
\bigskip

\textbf{Acknowledgments.} We thank Prof. Toufik Mansour for help in
formulating our ideas, Yehuda Zur for his meticulous work on initial Matlab experiments, and
Rafiq Mansour for enlightening comments about the manuscript.
We greatly appreciate the constructive comments of two anonymous reviewers which helped us improve the paper.
The first author was supported by MINECO of Spain and ERDF of EU, as part of the Ram\'on y Cajal
program (RYC-2013-13327) and the Grant MTM2014-59179-C2-1-P.
The second author's work was supported by research grant no. 2013003 of the United States-Israel Binational Science Foundation (BSF). The third author's work was supported by the EU FP7 IRSES program STREVCOMS, grant no. PIRSES-GA-2013-612669.

\clearpage

\end{document}